\newtheorem{theorem}{Theorem}
\newtheorem{corollary}[theorem]{Corollary}
\newtheorem{lemma}[theorem]{Lemma}
\theoremstyle{definition}
\newtheorem*{definition}{Definition}
\theoremstyle{remark}
\theoremstyle{remark}
\newtheorem*{example}{Example}
\begin{document}

\title[]{Lorenz ordering of parasite loads across hosts in Isham's model of Host-Macroparasite Interaction}
\author[]{R. McVinish}
\address{School of Mathematics and Physics, University of Queensland}
\email{r.mcvinish@uq.edu.au}

\subjclass[2020]{60E15; 92D25; 60E10}

\begin{abstract}

In Isham's model of host-macroparasite interaction, parasite-induced host mortality increases parasite aggregation in the sense of the Lorenz order and related measures when the distribution of the number of parasites entering the host at infectious contacts is log-concave. Furthermore, in the presence of parasite-induce host mortality, the rate of parasite mortality may no longer have a monotone effect on aggregation.

\end{abstract}

\keywords{aggregation; compound Poisson distribution; convex order; Gini index; Lorenz order}

\maketitle

\section{Introduction}

Nearly all parasite populations exhibit substantial variation in their distribution among hosts, a phenomenon known as aggregation \citep{Poulin:07}. While no universally accepted definition of aggregation exists, several measures are commonly used to quantify aggregation in parasite-host systems \citep{ML:2020, MPF:2023}. In mathematical models of parasite acquisition, aggregation is typically measured using either the variance-to-mean ratio (VMR) \citep{Isham:95, BP:00, HI:2000, Peacock:18} or the $k$ parameter of the negative binomial distribution \citep{AM:78a, AM:78b, RP:02, Schreiber:06, McPherson:12}. Both measures can be interpreted as quantifying departures from a Poisson distribution.

\citet{Poulin:1993} offered an alternative interpretation, arguing that parasitologists often view aggregation as deviation from a distribution in which all hosts carry the same parasite load. From this perspective, the Lorenz order \citep{Lorenz:1905, Arnold:87} and related measures become central. In particular, Poulin's {\em index of discrepancy} ($D$) \citep{Poulin:1993}, essentially the Gini index \citep{Gini:2005}, has become one of the standard measures in empirical studies. However, applications of the Lorenz order to  mathematical models of parasite acquisition are rare. One exception is the recent re-examination of Tallis-Leyton model \citep{TL:1969, McVinish:2025b}, a simple model in which parasites accumulate without affecting host mortality, or eliciting an immune response.

One of the aims in analysing mathematical models of parasite acquisition is to better understand the factors controlling the degree of aggregation. Parasite-induced host mortality (PIHM) is sometimes reported in the parasitology literature as reducing aggregation. The usual argument is that PIHM disproportionately removes heavily infected hosts, causing parasite loads to concentrate more tightly around the mean \citep{AG:82, Poulin:2011}. Supporting evidence for this claim appears limited. \citet{Isham:95} proposed a model for parasite acquisition with PIHM in which the host's mortality rate increases linearly with parasite load. In the special case where parasites enter the host one at a time following a Poisson process, the parasite load conditioned on host survival has a Poisson distribution and the resulting VMR is one regardless of PIHM strength. \citet{BP:00} consider related models with two different PIHM rate functions and showed that the VMR may exceed or fall below one depending on the form of the PIHM. 

Building on our recent analysis of the Tallis–Leyton model \citep{McVinish:2025b}, we apply the Lorenz-based perspective on aggregation to Isham’s model. Our results show that, under mild assumptions, PIHM increases aggregation and its presence can alter the effects of two other key parameters: the parasite mortality rate and the distribution of parasites entering the host at infectious contact.

\section{Preliminaries}

\subsection{Notation}
Let's first establish some basic notation. The probability mass function, distribution function, and survival function of a distribution will be denoted by $\pi$, $F$, and $\overline{F}$, respectively. The probability generating function of a distribution with probability mass function $\pi$ is $ G(z) = \sum_{i=0}^\infty \pi(i) z^i$. Subscripts will be used to denote the random variable to which these functions are associated.

\subsection{Isham's model} \label{sec:Isham}

\cite{Isham:95} proposed the following model for the parasite load of a host that has survived to age $a$. At birth the host is parasite free. During its lifetime, the host has infectious contacts at times following a homogeneous Poisson process with rate $\phi$. At the infectious contacts the host acquires a random number of parasites $C$. The number of parasites acquired at infectious contacts form a sequence of independent random variables. Acquired parasites die independently at rate $\mu_M$ per parasite. Let $\mu_H(a)$ be the mortality rate of the host age $a$ in the absence of any parasites. The host mortality rate is increased by $\alpha$ for each parasite acquired. For a host that has survived to age $a$ with a parasite load $m$, the transitions are
\begin{align*}
    m  \to m+k & \quad  \text{at rate } \phi \, \pi_C(k) \\
    m  \to m-1 & \quad  \text{at rate } \mu_M \\
    \text{host dies} & \quad  \text{at rate } \mu_H(a) + \alpha m 
\end{align*}
It is assumed throughout that $\mu_M + \alpha > 0$. Let $M(a)$ denote the parasite load of a host conditional upon the host's survival to age $a$. Isham showed that the PGF of $M(a)$ is 
\[
G_M(a;z) = \exp\left(\int_{\theta(a;1)}^1  \frac{\phi (1 - G_C(u))}{(\mu_M + \alpha)u - \mu_M} \, du + \int_{\theta(a;z)}^z \frac{\phi (1 - G_C(u))}{(\mu_M + \alpha)u - \mu_M} \, du \right),
\]
where 
\[
\theta(a;z) =  \lambda + (z-\lambda) \frac{ \exp( -(\alpha + \mu_M)a)}{(\alpha + \mu_M)}, \quad \text{and} \quad \lambda = \frac{\mu_M}{(\alpha + \mu_M)}.
\]
Of particular interest in this paper is the limiting case as $a \to \infty$ which has PGF 
\begin{equation}
G_M(\infty;z) = \exp\left(\frac{\phi}{\alpha + \mu_M} \int_z^1 \frac{G_C(\lambda) - G_C(u)}{u - \lambda} du \right).  \label{eq:pgf_limit}
\end{equation}
Convergence to the equilibrium is generally fast, due to the exponential decay in $\theta(a;z)$. 
Determining the distribution from (\ref{eq:pgf_limit}) is generally difficult, though Isham provided explicit expressions in two cases. 

\begin{example}
Suppose the parasites enter the host one at a time so $C=1$, almost surely. In this case $M(a)$ has a Poisson distribution with mean 
\begin{equation}
    \frac{\phi \left(1 - \exp \left( -(\mu_M + \alpha) a\right) \right)}{(\mu_M + \alpha)}.  \label{eq:poisson-mean}  
\end{equation}  
\end{example}

\begin{example}
Suppose the number of parasites entering the host at an infectious contact follows a Geometric distribution. In this case only the limiting distribution $M(\infty)$ has an explicit expression. Let $\mathsf{NB}(m,\nu)$ denote the negative binomial distribution with mean $m$ and variance $ m + m^2/\nu$. Isham showed that if $C \sim \mathsf{NB}(m_C, 1)$, then the distribution of $M(\infty)$ is 
\begin{equation}
\mathsf{NB}\left(\frac{\phi m_C}{\mu_M + \alpha + \alpha m_C}, \frac{\phi}{\mu_M + \alpha + \alpha m_C} \right). 
\label{eq:negative-binomial}  
\end{equation}
\end{example}

VMR is a standard measure of aggregation used in mathematical models in parasitology. The above examples show that for any values of $\alpha, \mu_M, \phi$, $\text{VMR}=1$ when $C=1$, almost surely, and $\text{VRM}=1+m_C$ when $C\sim \mathsf(m_C,1)$. In general, the variance and mean of $M(\infty)$ can be obtained from the PGF by differentiation, leading to the following expressions for VMR:
\begin{equation}
    \text{VMR} = \frac{G_C^\prime(1)}{1- G_C(\lambda)} - \frac{\mu_M}{\alpha} \label{eq:VMR}
\end{equation}
for $\lambda <1$, and 
\begin{equation}
    \text{VMR} = 1 + \frac{G^{\prime\prime}_C(1)}{2G^\prime_C(1)} \label{eq:VMR2}
\end{equation}
for $ \lambda = 1$. 

\subsection{Convex order and Lorenz order} \label{sec:cx}

The convex order is a partial order on the space of distributions that compares the variability of two distribution with the same expectation.

\begin{definition}
    Let $X$ and $Y$ be two random variables such that $\mathbb{E} \left[X\right] = \mathbb{E} \left[Y\right]$. We say $X$ is smaller than $Y$ in the {\em convex order}, denoted $X \leq_{\rm cx} Y$, if $ \mathbb{E} \left[\psi(X)\right] \leq \mathbb{E} \left[\psi(Y)\right]$ for all convex functions $ \psi: \mathbb{R} \to \mathbb{R}$, provided the expectations exist. 
\end{definition}

\citet[Section 3.A]{SS:07} provide an extensive review of results on the convex order. The distributions of interest in this analysis don't necessarily have the same mean so we will apply a related stochastic ordering that can be used to compare distributions with different means, provided they are supported on the non-negative reals with finite mean, called the Lorenz order \citep[Chapter 3]{Arnold:87}. The Lorenz order is based on the Lorenz curve, a graphical measure of wealth inequality proposed by \citet{Lorenz:1905}. Modern presentations of the Lorenz curve adopt the definition given by \citet{Gastwirth:1971}.

\begin{definition}
    The {\em Lorenz curve} $L: [0,1] \to [0,1] $ for the distribution $F$ on $[0,\infty)$ with finite mean $ \mu$ is given by
    \begin{equation}
        L(u) = \frac{\int_{0}^{u} F^{-1}(y)\, dy}{\mu}, 
    \end{equation}
    where $ F^{-1}$ is the quantile function
    \begin{equation}
        F^{-1}(y) = \sup \{ x : F(x) \leq y \} \quad \text{for } y \in (0,1).
    \end{equation}
\end{definition}

When a distribution places all probability on a single point, the corresponding Lorenz curve is given by $L(u) = u$. This is called the egalitarian line. A Lorenz curve never rises above the egalitarian line, that is $L(u) \leq u$ for all $ u \in [0,1]$. We can now state the definition of Lorenz order \citep[Definition 3.2.1]{Arnold:87}.

\begin{definition}
    Let $ X$ and $Y$ be random variables with the respective Lorenz curves denoted $ L_{X} $ and $ L_{Y}$. We say $ X$ is  smaller in the {\em Lorenz order}, denoted $ X \leq_{\rm{Lorenz}} Y $ if $ L_{X}(u) \geq L_{Y}(u)$ for every $ u \in [0,1]$.
\end{definition} 

Unless stated otherwise, any reference to increasing or decreasing aggregation are understood in terms of the Lorenz order of the distributions.

In our analysis of Isham's model we will not work directly with the Lorenz curve, instead we will exploit the connection between the Lorenz order and convex order:

\begin{align}
X &\leq_{\rm Lorenz} Y    & \text{is equivalent to}&  &\frac{X}{\mathbb{E} \left[X\right]} &\leq_{\rm cx} \frac{Y}{\mathbb{E} \left[Y\right]}. \label{eq:cx_lorenz_equiv}
\end{align}
As in \citet{McVinish:2025b}, compound Poisson distributions will play a large role in establishing the Lorenz order between two parasite-host systems. The following result will be used on several occasions.

\begin{lemma} \label{lem:CP}
    Let $N$ and $\tilde{N}$ be two Poisson random variables such that $\mathbb{E} \tilde{N} < \mathbb{E} N$, and let $X_1,X_2,\ldots$ and $\tilde{X}_1,\tilde{X}_2,\ldots$ be two independent and identically distributed sequences of random variable such that $X_i \leq_{\rm cx} \tilde{X}_i$ for all $i$. Then
    \[
    \sum_{i=1}^N X_i \leq_{\rm Lorenz} \sum_{i=1}^{\tilde{N}} \tilde{X}_i.
    \]
\end{lemma}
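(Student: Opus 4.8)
\medskip

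The plan is to convert the assertion into a convex-order comparison via the equivalence \eqref{eq:cx_lorenz_equiv} and then prove that comparison by constructing an explicit martingale coupling. Write $S=\sum_{i=1}^{N}X_{i}$ and $\tilde S=\sum_{i=1}^{\tilde N}\tilde X_{i}$, and put $\lambda=\mathbb{E} N$, $\tilde\lambda=\mathbb{E}\tilde N$. Since $X_{i}\leq_{\rm cx}\tilde X_{i}$ forces $\mathbb{E} X_{i}=\mathbb{E}\tilde X_{i}=:m$, Wald's identity gives $\mathbb{E} S=m\lambda$ and $\mathbb{E}\tilde S=m\tilde\lambda$, which differ. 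By \eqref{eq:cx_lorenz_equiv} the claimed Lorenz ordering $S\leq_{\rm Lorenz}\tilde S$ is therefore equivalent to
\[
\frac{S}{m\lambda}\;\leq_{\rm cx}\;\frac{\tilde S}{m\tilde\lambda},
\]
and it is this standardised statement I would target; note that no convex comparison of $S$ and $\tilde S$ themselves is possible, since they have different means.

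To build the coupling I would combine two ingredients. First, the martingale characterisation of the convex order \citep[Section~3.A]{SS:07}: because $X_{i}\leq_{\rm cx}\tilde X_{i}$, there is a joint law for $(X_{i},\tilde X_{i})$ with the prescribed marginals and $\mathbb{E}[\tilde X_{i}\mid X_{i}]=X_{i}$; take the pairs $(X_{i},\tilde X_{i})_{i\geq 1}$ i.i.d.\ and independent of $N$. Second, Poisson thinning: let $(B_{i})_{i\geq 1}$ be i.i.d.\ Bernoulli$(p)$ with $p=\tilde\lambda/\lambda\in(0,1]$, independent of everything else (this is the only place the hypothesis $\mathbb{E}\tilde N<\mathbb{E} N$ is used, and $\leq$ would do), and set $\tilde N=\sum_{i=1}^{N}B_{i}$ and $\tilde S=\sum_{i=1}^{N}B_{i}\tilde X_{i}$. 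Standard thinning properties give $\tilde N\sim\mathrm{Poisson}(\tilde\lambda)$, and because the $B_{i}$ are independent of the values $(\tilde X_{i})$, the sum $\sum_{i=1}^{N}B_{i}\tilde X_{i}$ has exactly the law of the compound Poisson sum in the statement. Meanwhile $S$ is measurable with respect to $\mathcal G:=\sigma\big(N,(X_{i})_{i\geq 1}\big)$.

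The computation is then short. Conditioning on $\mathcal G$, one has $\mathbb{E}[\tilde X_{i}\mid\mathcal G]=X_{i}$ while $B_{i}$ remains independent of $(\mathcal G,\tilde X_{i})$ with mean $p$, so
\[
\mathbb{E}\!\left[\left.\frac{\tilde S}{m\tilde\lambda}\,\right|\,\mathcal G\right]
=\frac{1}{m\tilde\lambda}\sum_{i=1}^{N}p\,X_{i}
=\frac{p}{m\tilde\lambda}\,S
=\frac{S}{m\lambda},
\]
the normalisations cancelling exactly by the choice $p=\tilde\lambda/\lambda$. Since $S/(m\lambda)$ is $\mathcal G$-measurable, the tower property yields $\mathbb{E}\big[\tilde S/(m\tilde\lambda)\,\big|\,S/(m\lambda)\big]=S/(m\lambda)$, whence $S/(m\lambda)\leq_{\rm cx}\tilde S/(m\tilde\lambda)$ by the characterisation above, and \eqref{eq:cx_lorenz_equiv} completes the argument.

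I do not anticipate a genuine obstacle so much as two points requiring care: verifying that the thinning construction reproduces precisely the distribution of $\tilde S$ stipulated in the lemma (independence of $\tilde N$ from the retained $\tilde X$-values), and checking the nesting of $\sigma$-algebras needed for the tower property. The conceptual heart of the proof is that the ``dilution'' factor $p$ coming from thinning and the differing normalisations $1/(m\lambda)$ versus $1/(m\tilde\lambda)$ are matched by design; this is also why the statement is naturally one about the Lorenz order rather than the convex order. If a more modular presentation is preferred, the same two ingredients apply in sequence: $\sum_{i=1}^{N}X_{i}\leq_{\rm cx}\sum_{i=1}^{N}\tilde X_{i}$ by closure of the convex order under independent compounding, followed by $\sum_{i=1}^{N}\tilde X_{i}\leq_{\rm Lorenz}\sum_{i=1}^{\tilde N}\tilde X_{i}$ by the thinning coupling with the jump law held fixed.
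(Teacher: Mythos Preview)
Your proof is correct and rests on the same two ingredients the paper uses: Bernoulli thinning of the Poisson count and the convex-order hypothesis on the summands. The paper proceeds modularly in two steps, citing closure results from \citet{SS:07}: first $p\sum_{i=1}^{N}X_{i}\leq_{\rm cx}\sum_{i=1}^{N}B_{i}X_{i}\stackrel{d}{=}\sum_{i=1}^{\tilde N}X_{i}$ via \citet[Theorems~3.A.33 and 3.A.12]{SS:07}, and then $\sum_{i=1}^{\tilde N}X_{i}\leq_{\rm cx}\sum_{i=1}^{\tilde N}\tilde X_{i}$ by the same closure theorems; chaining these gives $pS\leq_{\rm cx}\tilde S$ and hence the Lorenz comparison. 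You instead build a single explicit martingale coupling via Strassen's characterisation, carrying out both modifications at once and verifying the conditional-expectation identity by hand. The paper's version is shorter on the page because it offloads the work to cited results; yours is more self-contained and makes transparent exactly why the normalisations $1/(m\lambda)$ and $1/(m\tilde\lambda)$ cancel against the thinning factor $p$. The ``modular'' alternative you sketch at the end is essentially the paper's proof with the two steps taken in the opposite order.
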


\begin{proof}
    Let $p = \mathbb{E} \tilde{N}/\mathbb{E} N $ and define $B_1,B_2,\ldots$ be a sequence of independent $\mathsf{Bernoulli}(p)$ random variables. By \citet[Theorem 3.A.33]{SS:07}, $p X_i \leq_{\rm cx} B_i X_i$. Applying \citet[Theorem 3.A.12(d)]{SS:07} yields  
    \[
    p\sum_{i=1}^n X_i \leq_{\rm cx} \sum_{i=1}^n B_i X_i
    \]
    for any $n \in \mathbb{N}_0$. Applying \citet[Theorem 3.A.12(b)]{SS:07} and using the standard binomial thinning property of the Poisson distribution yields  
    \begin{equation}
    p\sum_{i=1}^N X_i \leq_{\rm cx} \sum_{i=1}^N B_i X_i \stackrel{d}{=} \sum_{i=1}^{\tilde{N}} X_i.  \label{eq:cp1}
    \end{equation}
    If $X_i \leq_{\rm cx} \tilde{X}_i$, then applying \citet[Theorem 3.A.12(b,d)]{SS:07} again we see 
    \begin{equation}
    \sum_{i=1}^{\tilde{N}} X_i \leq_{\rm cx} \sum_{i=1}^{\tilde{N}} \tilde{X}_i. \label{eq:cp2}
    \end{equation}
    The result now follows from (\ref{eq:cp1}), (\ref{eq:cp2}), and the connection between convex and Lorenz orders (\ref{eq:cx_lorenz_equiv}).
\end{proof}

If $X \leq_{\rm Lorenz} Y$ , then a measure of aggregation $I(\cdot)$ respecting the Lorenz order  satisfies $I(X) \leq I(Y)$. Several such measures are reviewed in \citet[Chapter 5]{Arnold:87}. Usually, the simplest measure to calculate that respects the Lorenz order is the coefficient of variation (CV). The following expressions give the CV of $M(\infty)$ for Isham's model:
\begin{equation}
\text{CV} = \sqrt{\frac{\alpha \, G_C^\prime(1)}{\phi \, (1-G_C(\lambda))^2} - \frac{\mu_M}{\phi \, (1-G_C(\lambda))}} \label{eq:cv2a}
\end{equation}
for $\lambda < 1$, and 
\begin{equation}
\text{CV} = \frac{1}{G_C^\prime(1)}\sqrt{\frac{\mu_M}{\phi} \left(G_C^\prime(1) + \tfrac{1}{2} G^{\prime\prime}_C(1)\right)}    \label{eq:cv2b}
\end{equation}
for $\lambda = 1$. Though not usually considered as such, $1 - \text{prevalence}$, that is the probability that the host is parasite free, is a measure of aggregation that respects the Lorenz order. For Isham's model, this is given by
\begin{equation}
    \mathbb{P}(M(\infty) = 0) = \exp\left(\frac{\phi}{\alpha + \mu_M} \int_0^1 \frac{G_C(\lambda) - G_C(u)}{u - \lambda} du \right). \label{eq:p0}
\end{equation}
Calculation of other measures of aggregation respecting the Lorenz order like the Gini index\citep{Gini:2005}, given by 
\[
\frac{\mathbb{E}\left|M(\infty) - M'(\infty)\right|}{2\mathbb{E}M(\infty)}, 
\]
where $M'(\infty)$ is an independent copy of $M(\infty)$, and the Pietra index \citep{Pietra:2014}, given by
\[
\frac{\mathbb{E}\left|M(\infty) - \mathbb{E}M(\infty)\right|}{2\mathbb{E}M(\infty)}, 
\]
require the numerical inversion of the PGF, which may be achieved using the algorithm described in \citet{AB:92}.

\section{Main Results}
\subsection{Compound Poisson representation}

Our first result shows that the distribution with PGF (\ref{eq:pgf_limit}) can be represented as a compound Poisson distribution where the distribution of the summands is a mixture and each component distribution has expectation equal to one. 

\begin{theorem} \label{thm:rep}
For each $j \in \mathbb{N}_0$, define $\beta_j = \sum_{i=j+1}^\infty \pi_C(i)$ and the function $\omega_j :[0,1] \to [0,1]$ such that 
\begin{equation}
    \omega_j(\lambda)  = \left\{ \begin{array}{ll}
    \frac{\beta_j (1-\lambda)\lambda^j}{1-G_C(\lambda)}, & \lambda \in [0,1) \\
    \frac{\beta_j}{G_C^{\prime}(1)}, & \lambda = 1
    \end{array} \right. 
    . \label{eq:thm_omega}
\end{equation}
If $\beta_j \neq 0$, define the function $G_j : [0,1] \to [0,1]$ such that
    \begin{equation}
        G_{j}(z)  = 1 -  \int_z^1 \frac{G_C(u) - \sum_{i=0}^{j} \pi_C(i) u^i}{\beta_j u^{j+1}} du,  \label{eq:thm_G}
    \end{equation}
Then (i) each $G_j(z)$ is a valid PGF with $G^\prime_j(1) = 1$, (ii) for any $\lambda \in [0,1]$, $\sum_{j=0}^\infty \omega_j(\lambda) = 1$, and (iii) for $\lambda \in [0,1)$,
\begin{equation}
G_M(\infty;z)  = \exp\left(\frac{\phi(1-G_C(\lambda))}{\alpha} \sum_{j=0}^\infty \omega_j(\lambda) \left(G_j(z) -1\right) \right)
\label{eq:pgf_rep}
\end{equation}
and for $\lambda = 1$, 
\begin{equation}
G_M(\infty;z)  = \exp\left(\frac{\phi\,  G^\prime_C(1)}{\mu_M} \sum_{j=0}^\infty \omega_j(1) \left(G_j(z) -1\right) \right).
\label{eq:pgf_repb}
\end{equation}

\end{theorem}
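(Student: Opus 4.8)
The plan is to verify the three claims in turn, with the main work going into (i). For part (i), I would first check that $G_j(1) = 1$ is immediate from the upper limit of integration, and that $G_j$ is well-defined on $[0,1]$: the integrand $\bigl(G_C(u) - \sum_{i=0}^j \pi_C(i) u^i\bigr)/(\beta_j u^{j+1})$ equals $\sum_{i=j+1}^\infty \pi_C(i) u^{i-j-1}/\beta_j$, which is a power series with nonnegative coefficients summing to $1$, hence bounded and continuous on $[0,1]$ with no singularity at $u=0$. Substituting this series into \eqref{eq:thm_G} and integrating term by term gives
\[
G_j(z) = 1 - \sum_{i=j+1}^\infty \frac{\pi_C(i)}{\beta_j}\cdot\frac{1 - z^{i-j}}{i-j} = \sum_{i=j+1}^\infty \frac{\pi_C(i)}{\beta_j}\Bigl(1 - \frac{1-z^{i-j}}{i-j}\Bigr),
\]
and since for each $i \geq j+1$ the quantity $1 - (1-z^{i-j})/(i-j)$ is itself a PGF (it is a mixture of $z^0,\dots,z^{i-j}$ with weights $1/(i-j)$, except the $z^0$ coefficient is $1 - 1/(i-j) + 1/(i-j)\cdot\mathbf{1}$... more cleanly, $\frac{1-z^m}{m} = \frac{1}{m}\sum_{\ell=0}^{m-1} z^\ell (1-z)$ shows $1 - \frac{1-z^m}{m} = \frac{1}{m}\sum_{\ell=1}^{m}\sum_{r=0}^{\ell-1} z^r$-type expression with nonnegative coefficients), $G_j$ is a convex combination of PGFs and hence a PGF. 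Differentiating the series at $z=1$ gives $G_j'(1) = \sum_{i\geq j+1}\pi_C(i)\cdot 1/\beta_j = 1$.

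For part (ii), I would use the expansion $1 - G_C(\lambda) = \sum_{i=1}^\infty \pi_C(i)(1-\lambda^i)$ and the telescoping identity $1 - \lambda^i = (1-\lambda)\sum_{j=0}^{i-1}\lambda^j$, which gives $1 - G_C(\lambda) = (1-\lambda)\sum_{j=0}^\infty \lambda^j \sum_{i=j+1}^\infty \pi_C(i) = (1-\lambda)\sum_{j=0}^\infty \beta_j \lambda^j$ for $\lambda\in[0,1)$; dividing by $1-G_C(\lambda)$ gives $\sum_j \omega_j(\lambda) = 1$. For $\lambda = 1$ the same rearrangement with $1-\lambda^i$ replaced by its limiting slope gives $G_C'(1) = \sum_{i=1}^\infty i\pi_C(i) = \sum_{j=0}^\infty \beta_j$, so $\sum_j \omega_j(1) = 1$; one should note both expressions for $\omega_j$ agree by taking $\lambda\uparrow 1$ in $(1-\lambda)\lambda^j/(1-G_C(\lambda)) \to 1/G_C'(1)$ via L'Hôpital.

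For part (iii), the strategy is to manipulate the integrand in \eqref{eq:pgf_limit}. Write $G_C(\lambda) - G_C(u) = \bigl(G_C(\lambda) - \sum_{i=0}^j \pi_C(i)u^i\bigr) - \bigl(G_C(u) - \sum_{i=0}^j \pi_C(i)u^i\bigr)$; the point is to realize $\frac{G_C(\lambda)-G_C(u)}{u-\lambda}$ as $\sum_{j=0}^\infty \beta_j \lambda^j \cdot$ (something involving $G_j$). Concretely, I expect to show
\[
\frac{G_C(\lambda) - G_C(u)}{u - \lambda} = \sum_{j=0}^\infty \beta_j \lambda^j \cdot \frac{G_C(u) - \sum_{i=0}^j \pi_C(i)u^i}{\beta_j u^{j+1}},
\]
which, after integrating $\int_z^1(\cdot)\,du$ and recognizing the right-hand integrand as the derivative-defining expression $1 - G_j(u)$ from \eqref{eq:thm_G} (so $\int_z^1 = 1 - G_j(z)$ up to sign), reassembles into \eqref{eq:pgf_rep} once multiplied by $\phi/(\alpha+\mu_M)$ and the factor $(1-G_C(\lambda))/\alpha\cdot$... wait, note $(\alpha+\mu_M)^{-1}(1-G_C(\lambda)) \cdot$ needs reconciling with the stated prefactor $\phi(1-G_C(\lambda))/\alpha$ — I would track this by using $\lambda = \mu_M/(\alpha+\mu_M)$, so $1-\lambda = \alpha/(\alpha+\mu_M)$, and the sum $\sum_j \beta_j\lambda^j = (1-G_C(\lambda))/(1-\lambda) = (\alpha+\mu_M)(1-G_C(\lambda))/\alpha$, which is exactly what converts $\phi/(\alpha+\mu_M)$ into $\phi(1-G_C(\lambda))/\alpha$ after factoring out $\omega_j(\lambda)$. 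The identity to verify is
\[
\frac{G_C(\lambda) - G_C(u)}{u-\lambda} = \sum_{j=0}^\infty \lambda^j \bigl(G_C(u) - \textstyle\sum_{i=0}^j \pi_C(i)u^i\bigr)u^{-j-1};
\]
I would prove it by expanding $G_C(\lambda) = \sum_i \pi_C(i)\lambda^i$, writing $\lambda^i - $ nothing... rather, expanding both sides as double series in powers of $\lambda$ and $u$ and matching coefficients, using $\frac{1}{u-\lambda} = \sum_{k\geq 0}\lambda^k u^{-k-1}$ for $|\lambda| < |u|$ (valid since $\lambda < 1$ and we integrate $u$ up to $1$, with the boundary handled by continuity/monotone convergence). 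The $\lambda = 1$ case \eqref{eq:pgf_repb} follows by the same computation with $\mu_M = (\alpha+\mu_M)\lambda$ giving prefactor $\phi G_C'(1)/\mu_M$ after substituting $\sum_j\beta_j = G_C'(1)$.

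The main obstacle I anticipate is the bookkeeping in part (iii): justifying the interchange of summation and integration, confirming convergence of the double series near $u = 1$ and $u = \lambda$ (the apparent singularity at $u=\lambda$ in $\frac{G_C(\lambda)-G_C(u)}{u-\lambda}$ is removable, and the series representation must be shown to respect this), and correctly matching the two different normalizing prefactors. The PGF-validity argument in part (i) and the telescoping in part (ii) are routine by comparison.
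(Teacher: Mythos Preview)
Your proposal is correct and follows essentially the same route as the paper: expand the integrand in \eqref{eq:thm_G} as the power series $\beta_j^{-1}\sum_{i\geq j+1}\pi_C(i)u^{i-j-1}$ and integrate termwise for (i), use the tail-sum/telescoping identity $1-G_C(\lambda)=(1-\lambda)\sum_j\beta_j\lambda^j$ for (ii), and for (iii) swap the order of the $j$- and $i$-sums to reduce $\sum_{j\geq 0}\lambda^j\sum_{i>j}\pi_C(i)u^{i-j-1}$ to $\sum_{i\geq 1}\pi_C(i)(u^i-\lambda^i)/(u-\lambda)$ via the finite geometric sum, with the prefactor reconciled exactly as you say through $1-\lambda=\alpha/(\alpha+\mu_M)$. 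The only slip is a sign in your displayed identity for (iii): the right-hand side is nonnegative while $\tfrac{G_C(\lambda)-G_C(u)}{u-\lambda}$ is nonpositive on the range of integration, so the left-hand side should read $\tfrac{G_C(u)-G_C(\lambda)}{u-\lambda}$; the paper handles $\lambda=1$ by a limiting argument $\lambda_n\uparrow 1$ rather than direct computation, but either works.
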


\begin{proof}[Proof of Theorem \ref{thm:rep}]
We prove the claims (i)-(iii) of Theorem \ref{thm:rep} separately.

(i) We first determine the power series expansion of $G_j(z)$:
    \begin{align}
        G_j(z) &= 1 - \int_z^1 \frac{G_C(u) - \sum_{i=0}^{j} \pi_C(i) u^i}{\beta_j u^{j+1}} du \nonumber \\
        & = 1 - \int_z^1 \beta_j^{-1} \sum_{i=0}^\infty \pi_C(i+j+1) \, u^{i}\, du \nonumber\\
        & = 1 -  \sum_{i=0}^\infty \frac{\pi_C(i+j+1)(1 - z^{i+1} )}{\beta_j\,(i+1)} \nonumber \\  
        & = 1 -   \sum_{i=0}^\infty \frac{\pi_C(i+j+1)}{\beta_j\, (i+1)} +  \sum_{i=0}^\infty \frac{\pi_C(i+j+1)}{\beta_j\, (i+1)} z^{i+1}. \label{eq:pmf_pgf}
    \end{align}
    As  $\beta_j^{-1} \sum_{i=0}^\infty \pi_C(i+j+1) =1$, it follows that $1 - \sum_{i=0}^\infty \frac{\pi_C(i+j+1)}{\beta_j\, (i+1)} \geq 0$. This power series has non-negative coefficients and the coefficients sum to one, so $G_j(z)$ is a valid PGF. Furthermore,
    \[
    G^{\prime}_j(1) = \frac{G_C(1) - \sum_{i=0}^{j} \pi_C(i) }{\beta_j} = \beta_j^{-1} \sum_{i=j+1}^{\infty} \pi_C(i) = 1.
    \]

(ii) The tail-sum formula for expectations and the fact $G_C^{\prime}(1) = \mathbb{E}\, C$ show that $\sum_{j=0}^\infty \omega_j(\lambda) = 1$ when $\lambda = 1$. In the case where $\lambda < 1$, 
    \[
    \sum_{j=0}^\infty \omega_j (\lambda) = \frac{1-\lambda}{1-G_C(\lambda)}\sum_{j=0}^\infty   \sum_{i=j+1}^\infty \pi_C(i) \lambda^j.
    \]
    Interchanging the order of summation and applying the geometric summation formula yields
    \[
    \sum_{j=0}^\infty \omega_j (\lambda) = \frac{1-\lambda}{1-G_C(\lambda)}\sum_{j=1}^\infty    \pi_C(j) \frac{1 - \lambda^j}{1-\lambda} = \frac{1}{1-G_C(\lambda)}\sum_{j=1}^\infty    \pi_C(j) (1 - \lambda^j).
    \]
    If $ \lambda = 0 $, then the sum is equal to one since $1-G_C(0) = \sum_{j=1}^\infty \pi_C(j)$. If $\lambda \in (0,1)$, then as $\pi_C(0)(1-\lambda^0) = 0$,
    \[
    \sum_{j=0}^\infty \omega_j (\lambda)= \frac{1}{1-G_C(\lambda)}\sum_{j=0}^\infty    \pi_C(j) (1 - \lambda^j) = 1.
    \]

    (iii) In the case where $\lambda < 1$,
    \begin{align*}
        \frac{\phi\, (1-G_C(\lambda))}{\alpha}\sum_{j=0}^\infty \omega_j(\lambda) \left(G_j(z)  -1 \right) 
        & =  - \frac{\phi\, (1-\lambda)}{\alpha} \sum_{j=0}^\infty \lambda^j \int_z^1 \frac{G_C(u) - \sum_{i=0}^{j} \pi_C(i) u^i}{u^{j+1}} du \\
        & =  - \frac{\phi}{\alpha + \mu_M} \sum_{j=0}^\infty \lambda^j \int_z^1 \sum_{i=j+1}^\infty \pi_C(i) u^{i-j-1} du \\
        & =  - \frac{\phi}{\alpha + \mu_M}  \int_z^1 \sum_{j=0}^\infty \sum_{i=j+1}^\infty \lambda^j \,   \pi_C(i) u^{i-j-1} du,
    \end{align*}
    where we have interchanged the order of integration and summation. 
    \begin{align*}
        \frac{\phi(1-G_C(\lambda))}{\alpha}\sum_{j=0}^\infty \omega_j(\lambda) \left(G_j(z) - 1 \right) & =  - \frac{\phi}{\alpha + \mu_M} \int_z^1 \sum_{i=1}^{\infty} \pi_C(i) u^{i-1} \sum_{j=0}^{i-1} (\lambda/u)^j  du \\
        & =  - \frac{\phi}{\alpha + \mu_M} \int_z^1 \sum_{i=1}^{\infty} \pi_C(i) u^{i-1} \frac{1 - (\lambda/u)^i}{1- (\lambda/u)}   du \\
        & = - \frac{\phi}{\alpha + \mu_M} \int_z^1 \sum_{i=1}^{\infty} \pi_C(i)  \frac{u^i - \lambda^i}{u- \lambda}   du \\
        & = \frac{\phi}{\alpha + \mu_M}\int_z^1  \frac{ G_C(\lambda) - G_C(u)}{u- \lambda}   du.
    \end{align*}
    This proves the equation (\ref{eq:pgf_rep}) for $\lambda <1$. In the case where $\lambda = 1$, take a sequence $\alpha_n \to 0$ and $\mu_{M,n} \to \mu_M$ such that $\mu_{M,n}/(\alpha_n + \mu_{M,n}) = \lambda_n \to 1$. The result then follows since
    \begin{align*}
        \lim_{n \to \infty} \frac{1 - G_C(\lambda_n)}{\alpha_n} = \lim_{n \to \infty} \frac{1-G_C(\lambda_n)}{1-\lambda_n} \frac{1-\lambda_n}{\alpha_n} = \frac{G^\prime_C(1)}{\mu_M}
    \end{align*}
    and for each $j \in \mathbb{N}_0$
    \begin{align*}
        \lim_{n\to\infty} \omega_j(\lambda_n) = \lim_{n\to\infty} \frac{1-\lambda_n}{1-G_C(\lambda_n)} \lambda_n^j \beta_j = \frac{\beta_j}{G_C^\prime(1)}.
    \end{align*}
\end{proof}

The distributions with PGF (\ref{eq:thm_G}) are fully determined by the distribution of $C$. The PIHM rate $\alpha$ and parasite mortality rate $\mu_M$ influence only the distribution of the summands in the compound Poisson representation via the weights $\omega_j(\lambda)$. In the following examples explicit expressions for the component distributions are derived for some specific distributions of $C$. In these cases the component distributions $F_j$ can be compared in the convex order.

\begin{example}
    Suppose the distribution of $C$ is supported on $\{0,1,2\}$ with $\pi_C(2) > 0$. Direct calculation shows
    \begin{align*}
        G_0(z) 
        & = \frac{\pi_C(2)}{2(\pi_C(1) + \pi_C(2))} + \frac{\pi_C(1)}{\pi_C(1) + \pi_C(2)} z + \frac{\pi_C(2)}{2(\pi_C(1) + \pi_C(2))} z^2
    \end{align*}
    and
    \begin{align*}
        G_1(z) 
        & = z.
    \end{align*}
    Therefore, $F_0$ is a symmetric distribution supported on $\{0,1,2\}$ while $F_1$ is the point mass at 1. From the definition of the convex order and Jensen's inequality, we see that $F_1 \leq_{\rm cx} F_0$. 
\end{example}

\begin{example}
    Suppose $C$ has a $\mathsf{Geometric}(p)$ distribution. Then
    \[
    \beta_j = \sum_{i=j+1}^\infty p (1-p)^{i} = (1-p)^{j+1}
    \]
    and
    \begin{align*}
        G_{j}(z) 
        & = 1 -  \int_z^1 \left(\beta_j \, u^{j+1}\right)^{-1}\sum_{i=j+1}^\infty p(1-p)^{i} u^{i} du\\
        & = 1 - \int_z^1 \frac{p}{1 - (1-p)u} du  \\
        & = 1 - \frac{p}{1-p} \ln\left(\frac{1 - (1-p)z}{p} \right).
    \end{align*}
    This is the PGF of a `zero-inflated' logarithmic series distribution. Importantly, this distribution does not depend on $j$, so $F_{j+1} \stackrel{d}{=} F_j$ for each $ j \in \mathbb{N}_0$.
\end{example}

Explicit computation of the component distributions appears to be feasible in only a limited number of cases. Nevertheless, under the additional assumption that the distribution of $C$ is either log-concave or log-convex on $\mathbb{N}_0$, we can still establish the convex ordering between component distributions $F_j$. Recall a distribution on $\mathbb{N}_0$ is said to be {\em log-concave} if the probability mass function $\pi$ satisfies $\pi(k)^2 \geq \pi(k-1) \pi(k+1)$ for all $k \geq 1$. If the probability mass function satisfies $\pi(k)^2 \leq \pi(k-1) \pi(k+1)$ for all $k \geq 1$, the distribution is said to be {\em log-convex}.

\begin{theorem} \label{thm:Fcx}
    If the distribution of $C$ is log-concave, then $F_{j+1} \leq_{\rm cx} F_{j}$ for any $ j \in \mathbb{N}_{0}$ with $\beta_{j+1} > 0$. If the distribution of $C$ is log-convex, then $F_{j} \leq_{\rm cx} F_{j+1}$ for any $ j \in \mathbb{N}_{0}$ with $\beta_{j+1} >0$.         
\end{theorem}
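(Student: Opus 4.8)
The plan is to read the component mass functions explicitly off the proof of Theorem~\ref{thm:rep} and then apply a single-crossing (cut) criterion for the convex order, using that every $F_j$ has mean one. From the expansion~(\ref{eq:pmf_pgf}), $F_j$ has mass function $\pi_{F_j}(k)=\pi_C(k+j)/(k\beta_j)$ for $k\ge1$ with the remaining probability at $0$, and $\mathbb{E}Y_j=G_j'(1)=1$ for every $j$ by Theorem~\ref{thm:rep}(i), where $Y_j\sim F_j$; both $F_j$ and $F_{j+1}$ are well defined since $\beta_j\ge\beta_{j+1}>0$. For $k\ge1$ with $\pi_C(k+j)>0$,
\[
r(k):=\frac{\pi_{F_{j+1}}(k)}{\pi_{F_j}(k)}=\frac{\beta_j}{\beta_{j+1}}\cdot\frac{\pi_C(k+j+1)}{\pi_C(k+j)}.
\]
Log-concavity of $C$ is equivalent to $m\mapsto\pi_C(m+1)/\pi_C(m)$ being nonincreasing on the support of $C$, which is an interval, so $r$ is nonincreasing where defined; log-convexity makes it nondecreasing, and in that case $C$ has full support $\mathbb{N}_0$ (a log-convex law is either the point mass at $0$, excluded by $\beta_{j+1}>0$, or has no zeros), so no further care is needed there.

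Assume $C$ is log-concave; the log-convex case is obtained by reversing every inequality. If $r\equiv1$ on the support then $F_{j+1}=F_j$ and we are done, so suppose not. Put $\nu(k)=\pi_{F_{j+1}}(k)-\pi_{F_j}(k)$ and $S(n)=\sum_{k=0}^n\nu(k)$, which is the difference of the distribution functions of $F_{j+1}$ and $F_j$ at $n$, so $S(\infty)=0$. Because $r$ is monotone and $\pi_{F_{j+1}},\pi_{F_j}$ are supported on intervals, $(\nu(k))_{k\ge1}$ is $\ge0$ on an initial segment $\{1,\dots,k^\ast\}$ and $\le0$ afterwards; moreover it is impossible that $\nu(k)\le0$ for all $k\ge1$ or that $\nu(k)\ge0$ for all $k\ge1$, since together with the common-mean identity $\sum_{k\ge1}k\pi_{F_{j+1}}(k)=\sum_{k\ge1}k\pi_{F_j}(k)=1$ either would force $F_{j+1}=F_j$. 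Hence $S$ is nondecreasing on $\{0,\dots,k^\ast\}$ and nonincreasing thereafter. If $\nu(0)>0$, then with $S(\infty)=0$ this unimodality would force $S\ge0$ everywhere, i.e.\ $F_{j+1}$ stochastically dominated by $F_j$, contradicting $\mathbb{E}Y_{j+1}=\mathbb{E}Y_j$; so $\nu(0)=S(0)\le0$, and then unimodality with $S(\infty)=0$ forces $S$ to change sign exactly once, from nonpositive to nonnegative.

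Now apply the Karlin--Novikoff cut criterion for the convex order (see, e.g., \citet[Section~3.A]{SS:07}): $F_{j+1}$ and $F_j$ have equal finite means, and the distribution function of $F_{j+1}$ lies weakly below that of $F_j$ to the left of some point and weakly above it to the right, whence $F_{j+1}\leq_{\rm cx}F_j$. In the log-convex case $r$ is nondecreasing, $(\nu(k))_{k\ge1}$ is $\le0$ on an initial segment and $\ge0$ afterwards, $\nu(0)\ge0$, $S$ changes sign once from nonnegative to nonpositive, and the cut criterion gives $F_j\leq_{\rm cx}F_{j+1}$. I expect the delicate part to be the middle paragraph: excluding the degenerate sign patterns, pinning down the sign of $\nu(0)$, and---in the log-concave case---handling the routine but unavoidable bookkeeping when the support of $C$ does not extend down to $j+1$, so that the single-crossing hypothesis of the cut criterion genuinely holds.
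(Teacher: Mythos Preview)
Your proof is correct and follows essentially the same route as the paper: both read off $\pi_{F_j}(k)=\pi_C(k+j)/(k\beta_j)$ from (\ref{eq:pmf_pgf}), use log-concavity/convexity of $C$ to show the mass-function difference $\pi_{F_j}-\pi_{F_{j+1}}$ has at most one sign change on $k\ge1$, deduce a single crossing of the distribution functions, and then invoke the cut (Karlin--Novikoff) criterion for equal-mean distributions---exactly \citet[Theorem~3.A.44]{SS:07} in the paper. The only cosmetic differences are that the paper phrases everything via survival functions $\overline{F}_j-\overline{F}_{j+1}$ rather than your $S$, and handles the ``no sign change'' case directly by the tail-sum formula, whereas you argue the sign of $\nu(0)$ via stochastic dominance plus equal means; these are equivalent bookkeeping choices.
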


\begin{proof}
    The proof is given for the case where the distribution of $C$ is log-concave. The case where the distribution of $C$ is log-convex is proved similarly. 
    
    The survival function $\overline{F}_j$ can be determined from (\ref{eq:pmf_pgf}) as
    \[
    \overline{F}_j(k) =  \sum_{i=k+1}^\infty \frac{\pi_C(i+j)}{i \beta_j}, \quad k \geq 0.
    \]
    If the sequence $\{\overline{F}_j(k) - \overline{F}_{j+1}(k): k \geq 0\}$ has a single sign change from negative to positive, then $F_{j+1} \leq_{\rm cx} F_j$  \citep[Theorem 3.A.44]{SS:07}. For $k \geq 0$,
    \begin{align*}
        \overline{F}_j(k) - \overline{F}_{j+1}(k) & = \sum_{i=k+1}^{\infty} i^{-1} \left(\frac{\pi_C(i+j)}{\beta_j} - \frac{\pi_C(i+j+1)}{\beta_{j+1}} \right).
    \end{align*}
    As the distribution of $C$ is log-concave, the sequence $\{\pi_C(i+1)/\pi_C(i): i \geq 0\}$ is non-increasing. Therefore, the sequence
    \[
    \left\{\frac{\pi_C(i+j)}{\beta_j} - \frac{\pi_C(i+j+1)}{\beta_{j+1}}: i \geq 1 \right\}
    \]
    can have at most one sign change and this must be from negative to positive since 
    \[
    \text{sign}\left(\frac{\pi_C(i+j)}{\beta_j} - \frac{\pi_C(i+j+1)}{\beta_{j+1}} \right) = \text{sign} \left(\frac{\beta_{j+1}}{\beta_{j}} - \frac{\pi_C(i+j+1)}{\pi_C(i+j)} \right). 
    \]
    This implies the sequence $\{\overline{F}_j(k) - \overline{F}_{j+1}(k): k \geq 0\}$ has at most a single sign change and this must be from negative to positive. If the sequence $\{\overline{F}_j(k) - \overline{F}_{j+1}(k): k \geq 0\}$ has no sign change, then either $F_{j+1} \stackrel{d}{=} F_j$ or the distributions $F_j$ and $F_{j+1}$ must have different expectations by the tail sum formula for expectations. However, each distribution $F_j$ has expectation one. Hence, unless $F_{j+1} \stackrel{d}{=} F_j$, there is one sign change in the sequence $\{\overline{F}_j(k) - \overline{F}_{j+1}(k): k \geq 0\}$ from negative to positive. Therefore, $F_{j+1} \leq_{\rm cx} F_j$.
\end{proof}


In the following subsections we apply these results to understand how the parameters of this model affect the aggregation of parasites in the host. When comparing two parasite-host systems, we will use a tilde to denote parameters from the second system.

\subsection{Parasite-induce host mortality} The examples in Section \ref{sec:Isham} show that there are cases where the PIHM parameter $\alpha$ has no effect on VMR. These same examples show that increasing $\alpha$ can {\em increase} aggregation in the sense of the Lorenz order since
\[
\mathsf{Poisson}(m_1) \leq_{\rm Lorenz} \mathsf{Poisson}(m_2) \quad \text{for } m_2 \leq m_1
\]
and 
\[
\mathsf{NB}(m_1, k_1) \leq_{\rm Lorenz} \mathsf{NB}(m_2,k_2) \quad \text{for } m_2 \leq m_1 \text{ and } k_2 \leq k_1.
\]
See, for example, \citet{ML:2024}. Furthermore, in these two examples CV is a strictly increasing function of $\alpha$ since VMR is constant but the expectation is a strictly decreasing function of $\alpha$. In the absence of parasite mortality, that is when $\mu_M =0$, increasing PIHM always increases aggregation. 

\begin{corollary} \label{cor:mu0}
    If $\alpha < \tilde{\alpha}$ and all other model parameters are equal with $\mu_M = 0$, then $M(\infty) \leq_{\rm Lorenz}  \tilde{M}(\infty) $.
\end{corollary}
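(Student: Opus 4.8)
The plan is to invoke the compound Poisson representation of Theorem~\ref{thm:rep} and observe that it collapses to a single component when $\mu_M = 0$, after which Lemma~\ref{lem:CP} closes the argument with no further work.

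First I would note that $\mu_M = 0$ forces $\lambda = \mu_M/(\alpha+\mu_M) = 0$, so we are in the regime $\lambda \in [0,1)$ covered by equation~(\ref{eq:pgf_rep}). Reading off the weights from~(\ref{eq:thm_omega}), $\omega_j(0) = \beta_j(1-0)\,0^j/(1-G_C(0))$, which equals $\beta_0/(1-\pi_C(0)) = 1$ when $j = 0$ and vanishes for every $j \geq 1$ (we may assume $\pi_C(0) < 1$, since otherwise $C = 0$ almost surely and there is nothing to prove). Hence~(\ref{eq:pgf_rep}) reduces to
\[
G_M(\infty;z) = \exp\!\left( \frac{\phi\,(1-\pi_C(0))}{\alpha}\bigl(G_0(z)-1\bigr)\right),
\]
so that $M(\infty) \stackrel{d}{=} \sum_{i=1}^N X_i$, where $N \sim \mathsf{Poisson}\bigl(\phi(1-\pi_C(0))/\alpha\bigr)$ and $X_1, X_2, \ldots$ are i.i.d.\ with common PGF $G_0$; by Theorem~\ref{thm:rep}(i) we have $\mathbb{E}[X_i] = G_0'(1) = 1$, so the relevant Lorenz curve is well defined.

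The key point is that the summand law $F_0$ is determined by the distribution of $C$ alone and is unaffected by $\alpha$. Applying the same reduction to the second system therefore gives $\tilde{M}(\infty) \stackrel{d}{=} \sum_{i=1}^{\tilde{N}} X_i$ with the \emph{same} sequence $X_1, X_2, \ldots$ and $\tilde{N} \sim \mathsf{Poisson}\bigl(\phi(1-\pi_C(0))/\tilde{\alpha}\bigr)$. Since $\alpha < \tilde{\alpha}$ we have $\mathbb{E}\tilde{N} < \mathbb{E} N$, and the convex-order hypothesis of Lemma~\ref{lem:CP} holds trivially ($X_i \leq_{\rm cx} X_i$), so the lemma yields $M(\infty) \leq_{\rm Lorenz} \tilde{M}(\infty)$. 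I do not anticipate any real obstacle: the only points needing a word of care are the degenerate case $\pi_C(0) = 1$ and the remark that $F_0$ has mean one, which guarantees the Lorenz order is meaningful even when $\mathbb{E} C = \infty$.
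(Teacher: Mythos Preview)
Your proposal is correct and follows essentially the same route as the paper: both observe that $\mu_M=0$ gives $\lambda=\tilde\lambda=0$, so the compound Poisson representation of Theorem~\ref{thm:rep} collapses to the single component $F_0$ (independent of $\alpha$), and then Lemma~\ref{lem:CP} finishes since $\alpha<\tilde\alpha$ forces $\mathbb{E}\tilde N<\mathbb{E}N$. Your write-up is slightly more explicit about computing $\omega_j(0)$ and flagging the degenerate case $\pi_C(0)=1$, but the argument is identical.
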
 
    
\begin{proof}
    If $\mu_M = 0$, then $\lambda = \tilde{\lambda} = 0$ for any values of $\alpha$ and $\tilde{\alpha}$. Let 
    \[
    N \sim \mathsf{Poisson}\left(\frac{\phi (1-G_C(0))}{\alpha}\right) \quad \text{and} \quad \tilde{N} \sim \mathsf{Poisson} \left(\frac{\phi (1-G_C(0)}{\tilde{\alpha}}\right).
    \] 
    By Theorem \ref{thm:rep}, $M(\infty) \stackrel{d}{=} \sum_{i=1}^N X_i$ and $\tilde{M}(\infty) \stackrel{d}{=} \sum_{i=1}^{\tilde{N}} X_i$, where $X_1, X_2, \ldots$ is a sequence of independent random variables having distribution $F_0$. The result now follows by application of Lemma \ref{lem:CP}.
\end{proof}

When parasite mortality is present, $0<\alpha < \tilde{\alpha}$ implies $0< \tilde{\lambda} <  \lambda < 1$. The distribution of the summands in the compound Poisson representation is now different for the two parasite-host systems. With the additional assumption that the distribution of $C$ is log-concave, we can still show that increasing $\alpha$ increases aggregation.
 
\begin{theorem} \label{thm:alpha}
     Assume the distribution of $C$ is log-concave. If $\alpha < \tilde{\alpha}$ and all other model parameters are equal, then $M(\infty) \leq_{\rm Lorenz} \tilde{M}(\infty)$.
\end{theorem}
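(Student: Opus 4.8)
The plan is to use the compound Poisson representation of Theorem~\ref{thm:rep} to write both parasite loads as random sums, and then apply Lemma~\ref{lem:CP}. The case $\mu_M=0$ is Corollary~\ref{cor:mu0}, so assume $\mu_M>0$; then $0\le\alpha<\tilde\alpha$ forces $0<\tilde\lambda<\lambda\le 1$, with $\lambda=1$ only in the boundary case $\alpha=0$, where (\ref{eq:pgf_repb}) replaces (\ref{eq:pgf_rep}) but the argument is otherwise unchanged. By Theorem~\ref{thm:rep}, $M(\infty)\stackrel{d}{=}\sum_{i=1}^{N}X_i$, where $N$ is Poisson with mean $m:=\phi(1-G_C(\lambda))/\alpha$ (read as $\phi G_C'(1)/\mu_M$ when $\lambda=1$) and $X_1,X_2,\ldots$ are i.i.d.\ with the mixture law $\sum_{j}\omega_j(\lambda)F_j$, which is a genuine probability distribution of mean one by parts (i) and (ii) of that theorem; similarly $\tilde M(\infty)\stackrel{d}{=}\sum_{i=1}^{\tilde N}\tilde X_i$ with $\tilde N$ Poisson of mean $\tilde m:=\phi(1-G_C(\tilde\lambda))/\tilde\alpha$ and $\tilde X_i$ i.i.d.\ with law $\sum_j\omega_j(\tilde\lambda)F_j$. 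In view of Lemma~\ref{lem:CP} it then suffices to prove (a) $\mathbb{E}\tilde N<\mathbb{E} N$ and (b) $X_1\leq_{\rm cx}\tilde X_1$.

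For (a), substitute $\alpha=\mu_M(1-\lambda)/\lambda$ into $m$ and use the elementary identity $\frac{1-G_C(\lambda)}{1-\lambda}=\sum_{k\ge 0}\beta_k\lambda^k$ to get $\mathbb{E} N=m=\frac{\phi}{\mu_M}\sum_{k\ge 0}\beta_k\lambda^{k+1}$, a power series in $\lambda$ with non-negative coefficients; it is therefore strictly increasing on $(0,1]$ unless $C\equiv 0$ (a degenerate case in which $M(\infty)=\tilde M(\infty)=0$ almost surely and the conclusion is trivial), so $\tilde\lambda<\lambda$ gives $\mathbb{E}\tilde N<\mathbb{E} N$. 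Since $\mathbb{E} N=\mathbb{E} M(\infty)$, this step merely records that strengthening PIHM lowers the mean load conditional on survival.

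For (b), log-concavity of $C$ together with Theorem~\ref{thm:Fcx} gives $F_{j+1}\leq_{\rm cx}F_j$, hence $F_\ell\leq_{\rm cx}F_j$ for all $\ell\ge j$ by transitivity; consequently, fixing $t\in\mathbb{R}$ and writing $Z_j\sim F_j$, the function $g(j):=\mathbb{E}[(Z_j-t)_+]$ is non-increasing in $j$. On the other hand, on the index set $\{j:\beta_j>0\}$ the mixing weights obey $\omega_j(\lambda)/\omega_j(\tilde\lambda)\propto(\lambda/\tilde\lambda)^j$, which is non-decreasing in $j$ because $\lambda>\tilde\lambda$; letting $J$ and $\tilde J$ have laws $(\omega_j(\lambda))_j$ and $(\omega_j(\tilde\lambda))_j$, this says $\tilde J\leq_{\rm lr}J$, hence $\tilde J\leq_{\rm st}J$. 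Since $g$ is non-increasing we obtain $\mathbb{E}[(X_1-t)_+]=\mathbb{E}[g(J)]\le\mathbb{E}[g(\tilde J)]=\mathbb{E}[(\tilde X_1-t)_+]$ for every $t$, and as $\mathbb{E} X_1=\mathbb{E}\tilde X_1=1$ the stop-loss characterisation of the convex order yields $X_1\leq_{\rm cx}\tilde X_1$. Combining (a) and (b), Lemma~\ref{lem:CP} gives $M(\infty)=\sum_{i=1}^{N}X_i\leq_{\rm Lorenz}\sum_{i=1}^{\tilde N}\tilde X_i=\tilde M(\infty)$, as required.

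Conceptually, increasing $\alpha$ affects the compound Poisson representation in two ways: it lowers the Poisson rate, and it shifts the mixing weights $\omega_j(\lambda)$ towards smaller $j$, hence (by Theorem~\ref{thm:Fcx}) towards more variable summands, and both effects raise aggregation in the Lorenz order. I expect the main obstacle to be step (b): one must see that the stochastic ordering $\tilde J\leq_{\rm st}J$ of the mixing indices transfers through the convex-ordered family $\{F_j\}$ to give $X_1\leq_{\rm cx}\tilde X_1$, which is precisely where log-concavity of $C$ (via Theorem~\ref{thm:Fcx}) is essential; step (a) needs only the power-series identity $\frac{1-G_C(\lambda)}{1-\lambda}=\sum_k\beta_k\lambda^k$.
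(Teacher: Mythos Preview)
Your proof is correct and follows essentially the same route as the paper: compound Poisson representation via Theorem~\ref{thm:rep}, verification of the two hypotheses of Lemma~\ref{lem:CP}, with step~(b) handled exactly as in the paper through the likelihood-ratio ordering of the mixing weights $\omega_j(\lambda)$ combined with Theorem~\ref{thm:Fcx}. The only substantive difference is in step~(a): the paper differentiates $(1-G_C(\lambda))/\alpha$ with respect to $\alpha$ and controls the sign of the derivative via a Taylor expansion of $G_C$ about $\lambda$, whereas you rewrite $\mathbb{E} N=\frac{\phi}{\mu_M}\sum_{k\ge 0}\beta_k\lambda^{k+1}$ using the tail-sum identity $\frac{1-G_C(\lambda)}{1-\lambda}=\sum_{k\ge 0}\beta_k\lambda^k$, from which monotonicity in $\lambda$ is immediate. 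Your argument here is a little cleaner and avoids any calculus, at the small cost of needing the case split on $\mu_M=0$ versus $\mu_M>0$ (the paper's derivative computation handles both uniformly since its final expression involves only $\lambda$ and $\alpha$).
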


\begin{proof} 
    Let 
    \[
    N \sim \mathsf{Poisson}\left(\frac{\phi (1-G_C(\lambda))}{\alpha}\right) \quad \text{and} \quad \tilde{N} \sim \mathsf{Poisson} \left(\frac{\phi (1-G_C(\tilde{\lambda}))}{\tilde{\alpha}}\right).
    \]
    Let $X_1,X_2,\ldots$ and $\tilde{X}_1, \tilde{X}_2,\ldots $ be two sequences of independent random variables such that 
    \[
    X_i \sim \sum_{j=0}^\infty \omega_j(\lambda) F_j, \quad \text{and} \quad \tilde{X}_i\sim \sum_{j=0}^\infty \omega_j(\tilde{\lambda}) F_j.
    \]
    By Theorem \ref{thm:rep} $M(\infty) \stackrel{d}{=} \sum_{i=1}^N X_i$ and $\tilde{M}(\infty) \stackrel{d}{=} \sum_{i=1}^{\tilde{N}} \tilde{X}_i$. The result follows from Lemma \ref{lem:CP} if we can show (i) $\mathbb{E} \tilde{N} < \mathbb{E} N$ and (ii) $X_i \leq_{\rm cx} \tilde{X}_i$.
    
    (i) Recall $\lambda = \mu_M/(\alpha+ \mu_M)$. We want to show $(1 - G_C(\lambda))/\alpha$ is a decreasing function of $\alpha$. Differentiating with respect to $\alpha$, 
    \begin{align}
        \frac{d}{d\alpha} \frac{(1- G_C(\lambda))}{\alpha} & = \frac{-G_C^{\prime}(\lambda) \frac{d\lambda}{d\alpha} \alpha - (1-G_C(\lambda))}{\alpha^2} \nonumber\\
        & = \frac{G_C^{\prime}(\lambda) \lambda (1-\lambda) - (1-G_C(\lambda))}{\alpha^2}. \label{eq:rate_deriv}
    \end{align}
    Taking the Taylor expansion of $G_C(1)$ about $\lambda$,
    \begin{align*}
        1 - G_C(\lambda) = G_C(1) - G_C(\lambda) & =  G_C^{\prime}(\lambda)(1-\lambda) + \sum_{k=2}^{\infty} G_C^{(k)}(\lambda) \frac{(1-\lambda)^{k}}{k!}.
    \end{align*}
    Substituting this expression into (\ref{eq:rate_deriv}) yields
    \[
    \frac{d}{d\alpha} \frac{(1- G_C(\lambda))}{\alpha}  = - \frac{1}{\alpha^2} \left(G_C^{\prime}(\lambda) (1-\lambda)^2 + \sum_{k=2}^{\infty} G_C^{(k)}(\lambda) \frac{(1-\lambda)^{k}}{k!}  \right)<0,
    \]
    since $G_C(\cdot)$ is an absolutely monotone function. This shows $ \mathbb{E} \tilde{N} < \mathbb{E} N$.  

    (ii) We want to show that for any convex function $\psi$, $\mathbb{E} \psi(X_i) \leq \mathbb{E} \psi(\tilde{X}_i)$, provided the expectations exist. 
    By Theorem \ref{thm:Fcx}, $\int \psi(x) F_j(dx)$ is a decreasing function of $j$ for any convex function $\psi$. For each $\lambda \in [0,1]$,  $\{\omega_j(\lambda), j \in \mathbb{N}_0\}$ forms a probability mass function on $\mathbb{N}_0$ by Theorem \ref{thm:rep}.  As $\alpha <\tilde{\alpha}$, $\tilde{\lambda} < \lambda$ and
    \[
    \frac{\omega_j(\lambda)}{\omega_j(\tilde{\lambda})} = \frac{(1-\lambda)(1-G_C(\tilde{\lambda}))}{(1-\tilde{\lambda})(1-G_C(\lambda))} \left(\frac{\lambda}{\tilde{\lambda}}\right)^j
    \]
    is increasing in $j$. \citet[Theorem 1.C.1]{SS:07} implies $\{\omega_j(\tilde{\lambda}), j \in \mathbb{N}_0\}$ is smaller than $\{\omega_j(\lambda), j \in \mathbb{N}_0\}$ in the usual stochastic ordering. Since $\int \psi(x) F_j(dx)$ is a decreasing function of $j$, 
    \[
     \sum_{j=0}^\infty \omega_j(\lambda) \int \psi(x) F_j (dx) \leq  \sum_{j=0}^\infty \omega_j(\tilde{\lambda}) \int \psi(x) F_j (dx).
    \]
    Hence, for any convex function $\psi$, $\mathbb{E} \psi(X_i) \leq \mathbb{E} \psi(\tilde{X}_i)$, provided the expectations exist. 
\end{proof}

The assumption that the distribution of $C$ is log-concave is not a necessary condition for $ \alpha < \tilde{\alpha}$ to imply $M(\infty) \leq_{\rm Lorenz} \tilde{M}(\infty)$. Suppose $C$ is supported on $\{0,1,2\}$. The distribution of $C$ is not log-concave if $\pi_C(1)^2 < \pi_C(0)\pi_C(2)$. The first example following Theorem \ref{thm:rep} showed that for any distribution on $C$ supported on $\{0,1,2\}$, $F_1 \leq_{\rm cx} F_0$. Since $\beta_j = 0$ for all $j \geq 2$, there are no component distributions $F_j$ for $j \geq 2$. Therefore, the same proof used for Theorem \ref{thm:alpha} shows that $ \alpha < \tilde{\alpha}$ implies $M(\infty) \leq_{\rm Lorenz} \tilde{M}(\infty)$ whenever $C$ is supported on $\{0,1,2\}$. Even in cases where the distribution of $C$ is log-convex, the measures of aggregation CV and $1-\text{prevalence}$ can be increasing (Figure \ref{fig:alpha}), suggesting $M(\infty) \leq_{\rm Lorenz} \tilde{M}(\infty)$ may hold in general.

\begin{figure}[h]
    \centering
    \includegraphics[width=\linewidth]{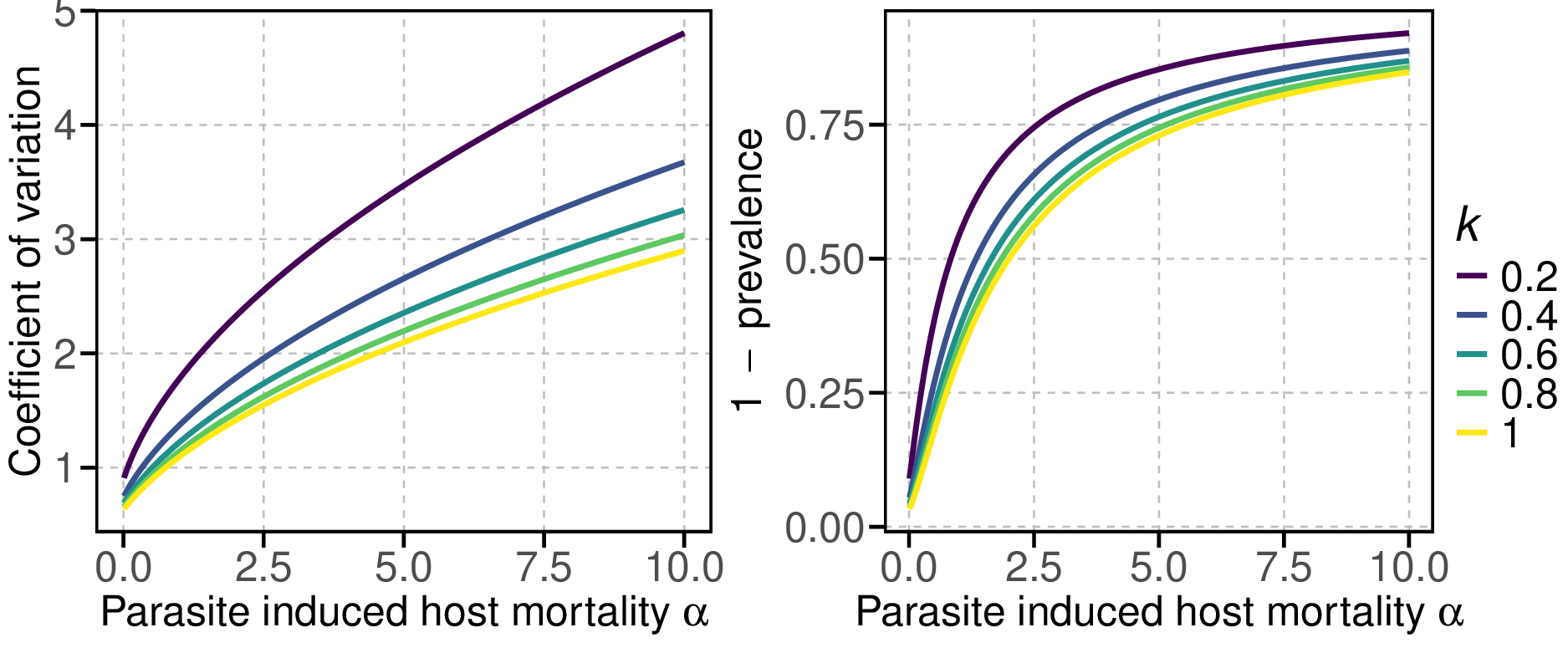}
    \caption{Plot of the coefficient of variation (left) and $1-\text{prevalence}$ (right) calculated for $M(\infty)$ as a function of $\alpha$ with $\phi=5$, $\mu_M=1$ and $C \sim \mathsf{NB}(1,k)$ where $k \in \{0.2, 0.4, \ldots, 1\}$.}
    \label{fig:alpha}
\end{figure}

\subsection{Parasite mortality}
In some respects, the rate of parasite mortality $\mu_M$ has a similar affect on aggregation as the PIHM rate $\alpha$. In the examples of Section \ref{sec:Isham} increasing the parasite mortality rate increases aggregation. Furthermore, in the absence of PIHM, increasing parasite mortality always increases aggregation.

\begin{corollary} \label{cor:mu}
    If $ \mu_M < \tilde{\mu}_M$ and all other model parameters are equal with $\alpha =0$, then $M(\infty) \leq_{\rm Lorenz} \tilde{M}(\infty)$.
\end{corollary}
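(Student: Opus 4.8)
The plan is to mirror the proof of Corollary \ref{cor:mu0}, exploiting the fact that setting $\alpha = 0$ forces the boundary case $\lambda = 1$. First I would observe that since $\mu_M + \alpha > 0$ and $\alpha = 0$, we have $\mu_M > 0$ and $\lambda = \mu_M/(\alpha + \mu_M) = 1$, and likewise $\tilde\lambda = 1$, regardless of the values of $\mu_M$ and $\tilde\mu_M$. Hence both $M(\infty)$ and $\tilde M(\infty)$ admit the compound Poisson representation (\ref{eq:pgf_repb}) of Theorem \ref{thm:rep}.

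Next I would note that in (\ref{eq:pgf_repb}) the summand law $\sum_{j=0}^\infty \omega_j(1) F_j$ depends only on the distribution of $C$: the weights $\omega_j(1) = \beta_j/G_C^\prime(1)$ and the component PGFs $G_j$ from (\ref{eq:thm_G}) are all functions of $\pi_C$ alone, with no dependence on $\mu_M$. Consequently, writing $N \sim \mathsf{Poisson}(\phi\, G_C^\prime(1)/\mu_M)$ and $\tilde N \sim \mathsf{Poisson}(\phi\, G_C^\prime(1)/\tilde\mu_M)$, and letting $X_1, X_2, \ldots$ and $\tilde X_1, \tilde X_2, \ldots$ be two independent sequences of i.i.d.\ random variables each with common law $\sum_{j=0}^\infty \omega_j(1) F_j$, Theorem \ref{thm:rep} gives $M(\infty) \stackrel{d}{=} \sum_{i=1}^N X_i$ and $\tilde M(\infty) \stackrel{d}{=} \sum_{i=1}^{\tilde N}\tilde X_i$.

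Then I would check the two hypotheses of Lemma \ref{lem:CP}: since $\mu_M < \tilde\mu_M$, the Poisson mean $\phi\, G_C^\prime(1)/\mu_M$ is strictly larger than $\phi\, G_C^\prime(1)/\tilde\mu_M$, so $\mathbb{E}\tilde N < \mathbb{E} N$; and because the two summand sequences share the same distribution, $X_i \leq_{\rm cx} \tilde X_i$ holds trivially. Lemma \ref{lem:CP} then yields $M(\infty) \leq_{\rm Lorenz} \tilde M(\infty)$.

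There is no genuinely hard step here — the structural work was already done in Theorem \ref{thm:rep}. The only point requiring care is recognising that $\alpha = 0$ collapses the problem onto the $\lambda = 1$ branch of the representation, where the absence of PIHM makes the summand law completely independent of $\mu_M$ and only the Poisson rate carries the dependence; once that is seen, the argument is identical to Corollary \ref{cor:mu0} and needs no log-concavity assumption on $C$. (Implicit throughout is $\mathbb{E} C = G_C^\prime(1) < \infty$, which is in any case required for the Lorenz order to be defined.)
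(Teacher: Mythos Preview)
Your proposal is correct and follows essentially the same argument as the paper: both observe that $\alpha=0$ forces $\lambda=\tilde\lambda=1$, invoke the compound Poisson representation (\ref{eq:pgf_repb}) of Theorem~\ref{thm:rep} in which the summand law depends only on $C$, and then apply Lemma~\ref{lem:CP} using $\mathbb{E}\tilde N<\mathbb{E}N$ together with the trivial convex ordering between identically distributed summands.
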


\begin{proof}
    If $\alpha = 0$, then $\lambda = \tilde{\lambda} =1$ for any values of $\mu_M$ and $\tilde{\mu}_M$. Let 
    \[
    N \sim \mathsf{Poisson}\left(\frac{\phi G^\prime_C(1)}{\mu_M}\right) \quad \text{and} \quad \tilde{N} \sim \mathsf{Poisson} \left(\frac{\phi G^\prime_C(1)}{\tilde{\mu}_M}\right).
    \] 
    By Theorem \ref{thm:rep}, $M(\infty) \stackrel{d}{=} \sum_{i=1}^N X_i$ and $\tilde{M}(\infty) \stackrel{d}{=} \sum_{i=1}^{\tilde{N}} X_i$, where $X_1, X_2, \ldots$ is a sequence of independent and identically distributed random variables. As $\mu_M < \tilde{\mu}_M$, the result follows from Lemma \ref{lem:CP}.
\end{proof}

In the presence of PIHM, increasing $\mu_M$ increases $\lambda$. This implies that more weight is placed on the component distributions $F_j$ with large $j$ in the compound Poisson representation of $M(\infty)$. If the distribution of $C$ is log-convex, then $F_j \leq_{\rm cx} F_{j+1}$ by Theorem \ref{thm:Fcx}. Essentially the same arguments used in the proof of Theorem \ref{thm:alpha} can be applied to show that increasing $\mu_M$ increases aggregation when $C$ has a log-convex distribution.

\begin{theorem} \label{thm:mu}
     Assume the distribution of $C$ is log-convex. If $\mu_M < \tilde{\mu}_M$ and all other model parameters are equal, then $M(\infty) \leq_{\rm Lorenz} \tilde{M}(\infty)$.
\end{theorem}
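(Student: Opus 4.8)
The plan is to reuse the proof of Theorem \ref{thm:alpha} almost verbatim, swapping the roles of log-concavity and log-convexity and reversing the direction in which the mixing weights shift. Since the case $\alpha = 0$ is Corollary \ref{cor:mu}, I may assume $\alpha > 0$, so that $\lambda = \mu_M/(\alpha + \mu_M)$ and $\tilde\lambda = \tilde\mu_M/(\alpha + \tilde\mu_M)$ both lie in $[0,1)$ with $\lambda < \tilde\lambda$, and the representation (\ref{eq:pgf_rep}) applies to both systems. Using Theorem \ref{thm:rep} I would write $M(\infty) \stackrel{d}{=} \sum_{i=1}^{N} X_i$ and $\tilde M(\infty) \stackrel{d}{=} \sum_{i=1}^{\tilde N} \tilde X_i$, where $N \sim \mathsf{Poisson}(\phi(1 - G_C(\lambda))/\alpha)$, $\tilde N \sim \mathsf{Poisson}(\phi(1 - G_C(\tilde\lambda))/\alpha)$, and $(X_i)$, $(\tilde X_i)$ are independent sequences with $X_i \sim \sum_{j} \omega_j(\lambda) F_j$ and $\tilde X_i \sim \sum_{j} \omega_j(\tilde\lambda) F_j$. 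By Lemma \ref{lem:CP} it then suffices to establish (i) $\mathbb{E}\tilde N < \mathbb{E} N$ and (ii) $X_i \leq_{\rm cx} \tilde X_i$.

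Step (i) is immediate and in fact simpler than in Theorem \ref{thm:alpha}: since $\alpha$ is unchanged, $\mathbb{E}\tilde N < \mathbb{E} N$ is equivalent to $G_C(\lambda) < G_C(\tilde\lambda)$, which holds because $\lambda < \tilde\lambda$ and $G_C$ is strictly increasing on $[0,1]$ whenever $\mathbb{P}(C \geq 1) > 0$ (the case $C \equiv 0$ being trivial). No Taylor expansion is needed here because there is no competing $1/\alpha$ factor.

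For step (ii) I would combine two monotonicities. First, by Theorem \ref{thm:Fcx}, log-convexity of $C$ gives $F_j \leq_{\rm cx} F_{j+1}$ for every $j$ with $\beta_{j+1} > 0$, so $\int \psi\, dF_j$ is nondecreasing in $j$ for each convex $\psi$. Second, for $\lambda > 0$ the likelihood ratio $\omega_j(\tilde\lambda)/\omega_j(\lambda)$ is a positive constant times $(\tilde\lambda/\lambda)^j$, which is increasing in $j$ since $\tilde\lambda/\lambda > 1$; hence, by \citet[Theorem 1.C.1]{SS:07}, $\{\omega_j(\tilde\lambda)\}$ dominates $\{\omega_j(\lambda)\}$ in the usual stochastic order (when $\lambda = 0$ the weights $\{\omega_j(0)\}$ reduce to the point mass at $j = 0$ and this domination is trivial). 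Pairing a stochastically larger mixing distribution with an integrand nondecreasing in $j$ yields $\sum_j \omega_j(\lambda) \int \psi\, dF_j \leq \sum_j \omega_j(\tilde\lambda) \int \psi\, dF_j$, that is $\mathbb{E}\psi(X_i) \leq \mathbb{E}\psi(\tilde X_i)$ whenever the expectations exist, which is (ii).

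I do not anticipate a real obstacle, as the structure is identical to that of Theorem \ref{thm:alpha}. The one point requiring care is direction-matching: one must check that raising $\mu_M$ moves the weights $\omega_j$ toward large $j$ (it does, because $\lambda$ increases) and that log-convexity orders the $F_j$ in the same direction (it does, by Theorem \ref{thm:Fcx}), so that the two effects reinforce rather than cancel; one should also record the boundary cases $\lambda = 0$ and $C \equiv 0$ for completeness.
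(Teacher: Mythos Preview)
Your proposal is correct and matches the paper's approach exactly: the paper does not spell out a proof of Theorem \ref{thm:mu} but simply says that ``essentially the same arguments used in the proof of Theorem \ref{thm:alpha}'' apply, and you have carried out precisely that adaptation, including the direction-matching that the paper highlights in the surrounding discussion. Your observation that step (i) becomes simpler here (monotonicity of $G_C$ alone suffices, with no Taylor expansion needed) is a nice refinement over a mechanical transcription.
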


Unfortunately, we cannot expect this behaviour to hold in general. Figure \ref{fig:mu} shows the coefficient of variation and $1-\text{prevalence}$ as a function of the parasite mortality rate $\mu_M$ with $\alpha = 1, \phi=5$, and $C \sim \mathsf{Poisson}(m_C)$ with $m_C \in \{4,5,6\}$ so the distribution of $C$ is log-concave. We see aggregation initially decreases as the parasite mortality rate increases in these cases. Furthermore, the minimums of the coefficient of variation and $1-\text{prevalence}$ occur for different values of $\mu_M$. Recall that if $X \leq_{\rm Lorenz} Y$, then $I(X) \leq I(Y)$ for all measures $I(\cdot)$ respecting the Lorenz order. Therefore, for at least some values of $\mu_M$ the distribution of parasite burden cannot be compared in the Lorenz order. 

\begin{figure}[h]
    \centering
    \includegraphics[width=\linewidth]{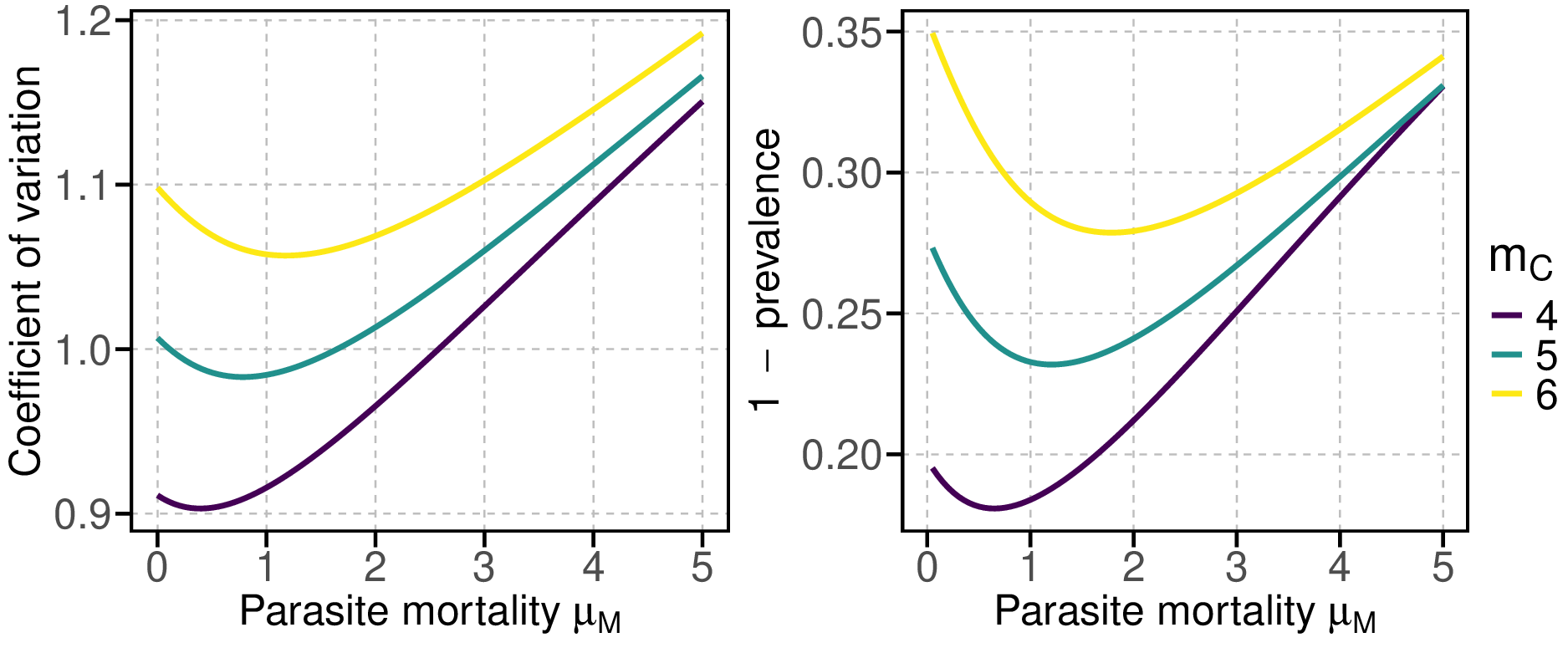}
    \caption{Plot of the coefficient of variation and $1-\text{prevalence}$ calculated for $M(\infty)$ as a function of $\mu_M$ with $\phi=5$, $\alpha=1$, and $C \sim \mathsf{Poisson}(m_C)$ where $m_C \in \{4,5,6\}$.}
    \label{fig:mu}
\end{figure}

\subsection{Rate of infectious contacts}
The rate of infectious contacts has no effect on VMR. However, we expect an increase in $\phi$ to decrease aggregation as $\text{CV} \propto 1/\sqrt{\phi}$. In fact, this follows immediately by applying Lemma \ref{lem:CP} to the compound Poisson representation in Theorem \ref{thm:rep}.

\begin{corollary}
    If $\phi < \tilde{\phi}$ and all other model parameters are equal, then $\tilde{M}(\infty) \leq_{\rm Lorenz} M(\infty)$.
\end{corollary}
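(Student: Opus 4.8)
The plan is to read off the compound Poisson representation from Theorem \ref{thm:rep} and apply Lemma \ref{lem:CP} directly, with identical summand distributions in the two systems. Since $\phi$ is the only parameter that differs, the ratio $\lambda = \mu_M/(\alpha+\mu_M)$ is the same for both systems, hence so are the component PGFs $G_j$ of \eqref{eq:thm_G} and the weights $\omega_j(\lambda)$ of \eqref{eq:thm_omega}. Writing $X_1,X_2,\dots$ for an i.i.d.\ sequence with common distribution $\sum_{j\ge 0}\omega_j(\lambda)F_j$, Theorem \ref{thm:rep} gives $M(\infty)\stackrel{d}{=}\sum_{i=1}^{N}X_i$ and $\tilde M(\infty)\stackrel{d}{=}\sum_{i=1}^{\tilde N}X_i$, where $N$ and $\tilde N$ are Poisson with means $\phi\kappa$ and $\tilde\phi\kappa$ respectively, and $\kappa = (1-G_C(\lambda))/\alpha$ when $\lambda<1$ (from \eqref{eq:pgf_rep}) and $\kappa = G_C^{\prime}(1)/\mu_M$ when $\lambda = 1$ (from \eqref{eq:pgf_repb}); in either case $\kappa>0$ does not depend on $\phi$.

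First I would note that since $\phi<\tilde\phi$ we have $\mathbb{E}N = \phi\kappa < \tilde\phi\kappa = \mathbb{E}\tilde N$. Next, because the two systems share the same summand distribution, the convex-order hypothesis of Lemma \ref{lem:CP} holds trivially: each summand is $\leq_{\rm cx}$ itself. The only point requiring a moment's care is bookkeeping with the direction of the order. Lemma \ref{lem:CP} states that the compound sum driven by the \emph{larger}-mean Poisson variable is \emph{smaller} in the Lorenz order. Applying the lemma with the roles of its "$N$" and "$\tilde N$" played by $\tilde N$ (mean $\tilde\phi\kappa$) and $N$ (mean $\phi\kappa$), and with both summand sequences equal to $X_1,X_2,\dots$, yields $\sum_{i=1}^{\tilde N}X_i \leq_{\rm Lorenz}\sum_{i=1}^{N}X_i$, i.e.\ $\tilde M(\infty)\leq_{\rm Lorenz}M(\infty)$, as claimed.

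There is no real obstacle here; the statement is an immediate consequence of Theorem \ref{thm:rep} and Lemma \ref{lem:CP}. If anything, the single item worth spelling out is that the argument is uniform in the two regimes $\lambda<1$ and $\lambda=1$: in both, the PGF \eqref{eq:pgf_rep}/\eqref{eq:pgf_repb} is that of a compound Poisson law whose Poisson intensity is exactly proportional to $\phi$ and whose summand law is independent of $\phi$, which is precisely the structure Lemma \ref{lem:CP} consumes. One could also remark, as a consistency check, that this is the Lorenz-order refinement of the elementary observation $\mathrm{CV}\propto \phi^{-1/2}$ recorded before the statement.
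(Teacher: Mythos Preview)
Your proof is correct and follows exactly the approach the paper indicates: the corollary is stated as an immediate consequence of applying Lemma \ref{lem:CP} to the compound Poisson representation of Theorem \ref{thm:rep}, and you have spelled out precisely that application, including the bookkeeping for both the $\lambda<1$ and $\lambda=1$ cases.
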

    
As a tractable means of incorporating between-host heterogeneity, Isham allowed the parameter $\phi$ for each host to be an independent realisation of a random variable $\Phi$. This increases VMR for the population over that for a single host. This form of between-host heterogeneity also increases aggregation in the sense of the Lorenz order.

\begin{theorem}
    If $\Phi \leq_{\rm cx} \tilde{\Phi}$, then $M(\infty) \leq_{\rm cx} \tilde{M}(\infty)$. 
\end{theorem}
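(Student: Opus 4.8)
The plan is to strip the problem down to a single convexity statement about the intensity of a compound Poisson distribution and then feed $\Phi\leq_{\rm cx}\tilde\Phi$ into it. For a single host with fixed contact rate $\phi$, Theorem~\ref{thm:rep} — equation (\ref{eq:pgf_rep}) if $\lambda<1$, equation (\ref{eq:pgf_repb}) if $\lambda=1$ — writes $G_M(\infty;z)=\exp\big(\phi\kappa(G_F(z)-1)\big)$, where $\kappa>0$ and the PGF $G_F=\sum_{j}\omega_j(\lambda)G_j$ depend on $\alpha,\mu_M$ and the law of $C$ but \emph{not} on $\phi$, and where $G_F'(1)=\sum_j\omega_j(\lambda)G_j'(1)=\sum_j\omega_j(\lambda)=1$ by Theorem~\ref{thm:rep}. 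Thus $M(\infty)\stackrel{d}{=}S_{\phi\kappa}$, where $\{S_\nu\}_{\nu\geq0}$ is the compound Poisson process with unit Poisson intensity and jump law $F$ (an $\mathbb{N}_0$-valued law of mean $1$, so $\mathbb{E} S_\nu=\nu$). Conditioning on $\Phi$, the loads mixed over hosts are $M(\infty)\stackrel{d}{=}S_{\kappa\Phi}$ and $\tilde M(\infty)\stackrel{d}{=}S_{\kappa\tilde\Phi}$, built from the \emph{same} process. Since $\Phi\leq_{\rm cx}\tilde\Phi$ forces $\mathbb{E}\Phi=\mathbb{E}\tilde\Phi$, the two laws share the mean $\kappa\mathbb{E}\Phi$, so the convex order is the right target.

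Now fix a convex $\psi:\mathbb{R}\to\mathbb{R}$ with $\mathbb{E}\,\psi(\tilde M(\infty))<\infty$ and set $h(\nu):=\mathbb{E}\,\psi(S_\nu)$. Then $\mathbb{E}\,\psi(M(\infty))=\mathbb{E}_\Phi[h(\kappa\Phi)]$ and $\mathbb{E}\,\psi(\tilde M(\infty))=\mathbb{E}_{\tilde\Phi}[h(\kappa\tilde\Phi)]$. If $h$ is convex on $[0,\infty)$, then $\nu\mapsto h(\kappa\nu)$ is convex, and $\Phi\leq_{\rm cx}\tilde\Phi$ yields $\mathbb{E}_\Phi[h(\kappa\Phi)]\leq\mathbb{E}_{\tilde\Phi}[h(\kappa\tilde\Phi)]$; together with the equality of means this is exactly $M(\infty)\leq_{\rm cx}\tilde M(\infty)$. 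So everything reduces to proving $h$ convex.

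To do that, write $h(\nu)=\sum_{n\geq0}e^{-\nu}\tfrac{\nu^n}{n!}a_n$ with $a_n=\mathbb{E}\,\psi(Y_1+\dots+Y_n)$, $Y_i$ i.i.d.\ $\sim F$; differentiating the (locally uniformly convergent) series term by term gives $h'(\nu)=\sum_{m\geq0}e^{-\nu}\tfrac{\nu^m}{m!}(a_{m+1}-a_m)=\mathbb{E}[\delta(S_\nu)]$, where $\delta(s):=\mathbb{E}_Y[\psi(s+Y)-\psi(s)]$ and $Y\sim F$ is independent of $S_\nu$. Since $\psi$ is convex its forward differences are non-decreasing and $Y$ is $\mathbb{N}_0$-valued, so for integers $s\leq s'$,
\[
\psi(s'+Y)-\psi(s')=\sum_{u=0}^{Y-1}\big(\psi(s'+u+1)-\psi(s'+u)\big)\ \geq\ \sum_{u=0}^{Y-1}\big(\psi(s+u+1)-\psi(s+u)\big)=\psi(s+Y)-\psi(s),
\]
whence $\delta$ is non-decreasing on $\mathbb{N}_0$. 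Finally $\{S_\nu\}$ has non-negative increments, so $\nu\mapsto S_\nu$ is stochastically increasing, and therefore $h'(\nu)=\mathbb{E}[\delta(S_\nu)]$ is non-decreasing in $\nu$; that is, $h$ is convex.

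The genuinely non-routine step is this convexity of $h$, and the point to get right is that it uses the non-negativity of the jumps $Y$ in an essential way: without it, $\nu\mapsto\mathbb{E}\,\psi(S_\nu)$ need not be convex (for $\psi(x)=|x|$ and $S_\nu\sim\mathsf{N}(0,\nu)$ one has $\mathbb{E}|S_\nu|\propto\sqrt\nu$, which is concave). The remaining ingredients are bookkeeping: the passage to the mixed model by conditioning on $\Phi$, the equal-means check, the term-by-term differentiation of the Poisson series, and restricting — as the definition of $\leq_{\rm cx}$ permits — to $\psi$ with $\mathbb{E}\,\psi(\tilde M(\infty))<\infty$, so that every expectation above is finite (finiteness of $\mathbb{E}\,\psi(M(\infty))$ then follows from the inequality itself, or can be checked directly from $F$ having mean $1$ and $\Phi$ having finite mean). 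A differentiation-free variant works too: deduce midpoint convexity of $h$ from a second-difference inequality $\mathbb{E}[\psi(A+B+B')-\psi(A+B)-\psi(A+B')+\psi(A)]\geq0$ for independent non-negative increments $B,B'$ (again convexity of $\psi$ and $B,B'\geq0$), then invoke continuity of $h$; this is cleaner in spirit but no shorter.
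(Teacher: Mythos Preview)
Your argument is correct and rests on the same idea as the paper's: the convexity of the map from the Poisson intensity to $\mathbb{E}\,\psi(\cdot)$, followed by feeding $\Phi\leq_{\rm cx}\tilde\Phi$ through that map. The execution differs slightly. The paper works in two steps, first invoking \citet{Schweder:1982} (Poisson as a regular exponential family) together with \citet[Theorem~3.A.21]{SS:07} to obtain $N(\Phi)\leq_{\rm cx}N(\tilde\Phi)$ for the Poisson \emph{counts}, and then lifting to the compound sums via \citet[Theorem~3.A.14]{SS:07}. You instead prove convexity of $\nu\mapsto\mathbb{E}\,\psi(S_\nu)$ for the compound Poisson directly, by differentiating the Poisson series and exploiting the non-negativity of the jumps; this is more elementary and self-contained, and your remark that non-negativity of $Y$ is essential is a nice diagnostic that the cited results hide. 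The paper's version is shorter and makes the exponential-family structure explicit; yours avoids external references at the cost of a few lines of computation.
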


\begin{proof}
    Assume $\lambda <1$. Let $N(\cdot)$ be a Poisson process with rate $(1-G_C(\lambda))/\alpha$. By Theorem \ref{thm:rep} we need to show
    \[
    \sum_{i=1}^{N(\Phi)} X_i \leq_{\rm cx} \sum_{i=1}^{N(\tilde{\Phi})} X_i,
    \]
    where $X_1,X_2,\ldots$ is a sequence of independent and identically distributed random variables. As the Poisson distribution is a regular exponential family distribution with expectation linear in the mean, \citet[Proposition 2]{Schweder:1982} implies $\mathbb{E} \psi(N(\phi))$ is a convex function of $\phi$. Applying \citet[Theorem 3.A.21]{SS:07}, we see $N(\Phi_1) \leq_{\rm cx} N(\Phi_2)$. The result now follows from \citet[Theorem 3.A.14]{SS:07}. To prove the case where $\lambda = 1$, only the rate of the Poisson process $N(\cdot)$ changes.
\end{proof}

\subsection{Distribution of $C$}
In the absence of PIHM, Isham's model reduces to a special case of the Tallis-Leyton model \citep{TL:1969}. Our previous analysis \citep{McVinish:2025b} showed that increased aggregation in the distribution of $C$ increases aggregation in the parasite load of hosts. In particular, if $C$ and $\tilde{C}$ are related through binomial thinning, that is  $G_{\tilde{C}}(z) = G_C(1-p + pz)$ for some $p \in (0,1)$, then $M(\infty) \leq_{\rm cx} \tilde{M}(\infty)$ \citep[Theorem 4]{McVinish:2025b}. Note that in this case $C \leq_{\rm Lorenz} \tilde{C}$. 

As we have seen for parasite mortality, the presence of PIHM makes the role of the distribution of $C$ more difficult to analyse. We are only able to make some comments in the case where $C\sim\mathsf{NB}(m_C,1)$, for which $M(\infty)$ has the negative binomial distribution (\ref{eq:negative-binomial}). An examination of the standard aggregation measures shows that the effect of $m_C$ is not monotone. Direct calculation shows the square coefficient of variation is  
\[
\text{CV}^2 = \frac{\mu_M + 2\alpha}{\phi} + \frac{\alpha}{\phi} m_C + \frac{\mu_M + \alpha}{\phi m_C}.
\]
This function is decreasing in $m_C$ on $(0, (\mu_M+\alpha)/\alpha]$ and increasing on $[(\mu_M+\alpha)/\alpha,\infty)$. The probability that a host is parasite free is
\[
\mathbb{P}(M(\infty) = 0) = (1+m_C)^{-\phi/(\mu_M + \alpha + \alpha m_C)}
\]
and this can be shown to have similar behaviour to CV. We also evaluate the Gini and Pietra indices numerically using the expressions in \citet{ML:2024}. Figure \ref{fig:C} shows that each of these aggregation measures are initially decreasing in $m_C$ but then become increasing for sufficiently large $m_C$. 
However, the minimum occurs at different values of $m_C$ for different aggregation measures which implies that for at least some values of $m_C$ the distribution of parasite load cannot be compared in the Lorenz order.

\begin{figure}
    \centering
    \includegraphics[width=0.6\linewidth]{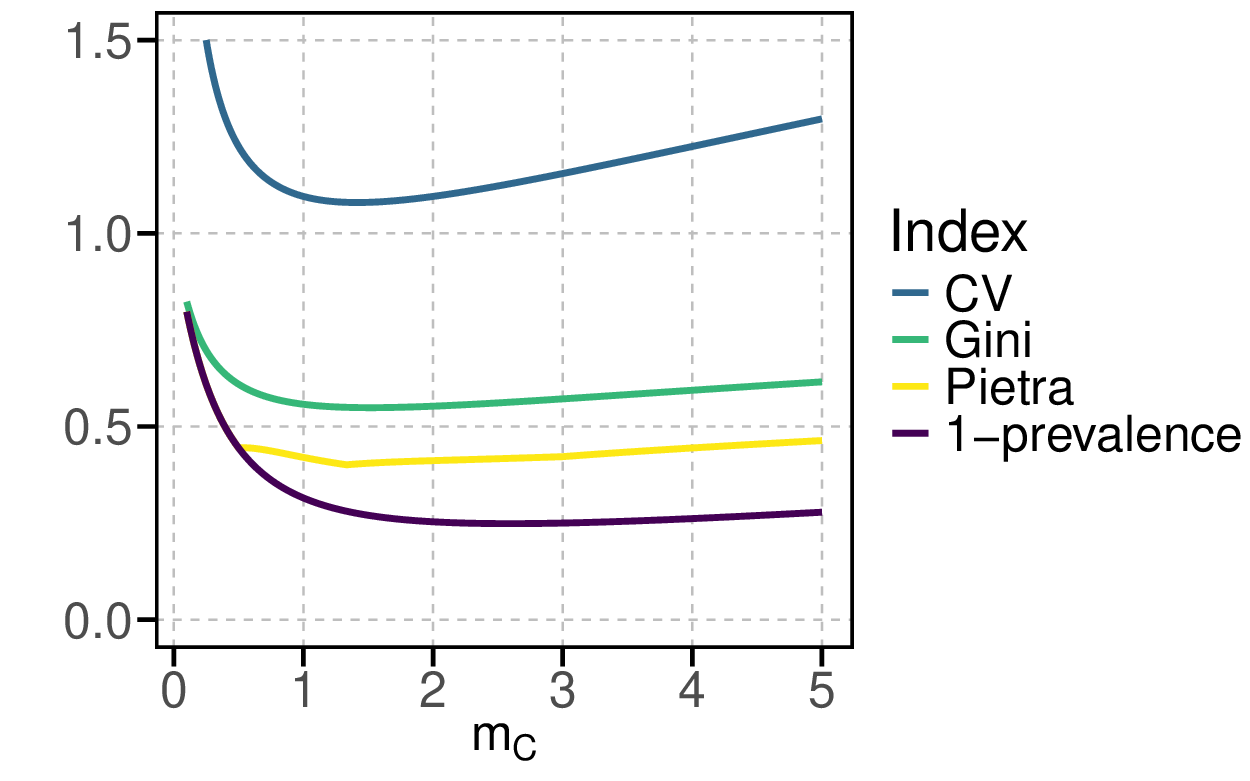}
    \caption{Plot of aggregation measures calculated for $M(\infty)$ with $\phi=5$, $\alpha=1$, $\mu_M=1$, and $C \sim \mathsf{NB}(m_C,1)$.}
    \label{fig:C}
\end{figure}

\section{Conclusion}

This study has examined Isham’s model through the lens of the Lorenz order as the definition of aggregation. By adopting this perspective, we provide new theoretical insight into how parasite-induced host mortality (PIHM) and parasite mortality shape aggregation, complementing the growing literature that applies Lorenz-based measures, particularly the Gini index, to parasitology data. Despite these advances, several questions remain open.

First, our results show that PIHM increases aggregation when the distribution of clump size $C$ is log-concave or is supported on $\{0,1,2\}$. Numerical evidence (Figure \ref{fig:alpha}) suggest this effect may hold under weaker conditions on $C$, but further work is needed to determine whether PIHM consistently increases aggregation across all plausible distributions of $C$.

Second, for log-convex $C$, parasite mortality tends to increase aggregation. However, for other distributions the relationship is non-monotonic, with common aggregation measures showing an initial decrease before a subsequent increase. This pattern could have biological significance, particularly for parasites in paratenic hosts, where parasite mortality rates are typically very low.  A closer examination is required to establish whether this non-monotonic behaviour is confined to certain measures of aggregation or reflects a genuine decrease in the Lorenz order.

Finally, changes in the functional form of PIHM or parasite mortality may substantially alter their affect on aggregation. This was observed by \citet{BP:00} for VMR with $C=1$  almost surely, and there is no reason to expect Lorenz-based measures to  behave differently. Understanding these dependencies remains an important direction for future research.

\bigskip
\noindent {\bf Funding information:} There are no funding bodies to thank relating to this creation of this article.

\bigskip
\noindent {\bf Competing interests:} There were no competing interests to declare which arose during the creation of this article.

\bigskip
\noindent {\bf Use of artificial intelligence (AI) tools:} Microsoft Copilot (GPT-5) was used in the revision of some text and finding the example used in Figure 2.




%
%
%
%

\bibliographystyle{plainnat}
\bibliography{parasites}

\end{document}